\newtheorem{Thm}{Theorem}
\newtheorem{Lemm}{Lemma}
\newtheorem{Prop}{Property}
\newtheorem{Assump}{Assumption}
\newcommand{\Min}{{\mathop{\mathrm{minimize}}}}
\newcommand{\diagnal}{{\mathbf{diag}}}
\newcommand{\nullspc}{{\mathbf{null}}}
\begin{document}

\markboth{J.S. ~ZENG AND W.T.~YIN} {ExtraPush for Convex Smooth Decentralized Optimization}

\title{EXTRAPUSH FOR CONVEX SMOOTH DECENTRALIZED OPTIMIZATION OVER DIRECTED NETWORKS}

\author{Jinshan Zeng
\thanks{College of Computer Information Engineering, Jiangxi Normal University, Nanchang, \\Jiangxi 330022, China \\ Email: jinshanzeng@jxnu.edu.cn}
\and
 Wotao Yin
\thanks{Department of Mathematics, University of California, Los Angeles, CA 90095, USA\\ Email: wotaoyin@math.ucla.edu}}

\maketitle

\begin{abstract}
In this note, we extend the algorithms Extra \cite{Yin-EXTRA2015} and subgradient-push \cite{Nedic-SubgradientPush2015} to a new algorithm {\em ExtraPush} for  consensus optimization with convex differentiable objective functions over a \emph{directed} network. {When the stationary distribution of the network can be computed in advance}, we propose a simplified algorithm called \emph{Normalized ExtraPush}. {Just like Extra, both ExtraPush and Normalized ExtraPush can iterate with a fixed step size.  But unlike Extra, they can take a column-stochastic mixing matrix, which is not necessarily doubly stochastic. Therefore, they remove the undirected-network restriction of Extra. Subgradient-push, while also works for \emph{directed} networks, is slower on the same type of problem  because it must use a sequence of diminishing step sizes.}

We present preliminary analysis for ExtraPush under a bounded sequence assumption. For Normalized ExtraPush, we show that it naturally produces a bounded, linearly convergent sequence provided that the objective function is strongly convex.

{In our numerical experiments, ExtraPush and Normalized ExtraPush performed similarly well. They are significantly faster than subgradient-push, even when we hand-optimize the step sizes for the latter.}
\end{abstract}

\begin{classification}
90C25, 90C30.
\end{classification}

\begin{keywords}
Decentralized optimization, Directed graph, Consensus, Non-doubly stochastic, Extra.
\end{keywords}

\section{Introduction}
\label{sec:into}

We consider the following consensus optimization problem defined on a directed, strongly connected network of $n$ agents:
\begin{equation}
\label{Eq:multi-agentOPT}
\mathop{\Min}_{x\in \mathbf{R}^p} f(x) \triangleq \sum_{i=1}^n f_i(x),
\end{equation}
where $f_i$ is a proper, closed, convex, differentiable function only known to the agent $i$.

The model (\ref{Eq:multi-agentOPT}) finds applications in decentralized averaging, learning, estimation, and control.
For a stationary network with \emph{bi-directional} communication, the existing algorithms include the (sub)gradient methods \cite{Chan-FastGradient2012,Jakovetic-FastGradient2014,Matei-Subgradient2011,Nedic-Subgradient2009,Yin-EXTRA2015,Yin-gradient2014},
and the primal-dual domain methods such as the decentralized alternating direction method of multipliers (DADMM) \cite{Schizas-DADMM2008,Shi-DADMM2014}.

This note focuses on a \emph{directed} network (with \emph{directional} communication), where the research of decentralized optimization is pioneered by the works {\cite{Tsianos2012a-Alg,Tsianos2012b-Theory,Tsianos2013-PhdThsis}}.
When communication is bi-directional, algorithms can use a symmetric and doubly-stochastic mixing matrix to obtain a consensual solution; however, once the communication is directional, the mixing matrix becomes generally asymmetric and only column-stochastic. {Also consider the setting where  each agent broadcasts its information to its neighbors, yet an agent may not receive the information from a neighbor. An agent can weigh its  information (both from itself and received from its neighbors) so that the total weights add up to 1, but an agent cannot ensure that its broadcasted information receives weights that precisely add up to exactly 1. Therefore, only each column of the mixing matrix sums to 1.}
 In the column-stochastic setting, the push-sum protocol \cite{Kempe-Push-sumw2003} can be used to obtain a stationary distribution for the mixing matrix.

In the symmetric and  doubly-stochastic setting, if the objective is Lipschitz-differentiable, the gradient-based algorithm Extra \cite{Yin-EXTRA2015}  converges at the rate of $O(1/t)$, where $t$ is the iteration number. In the column-stochastic setting, the best rate is $O(\ln t /\sqrt{t})$ from the subgradient-based algorithm \cite{Nedic-SubgradientPush2015}. {We address the open question of how to} take advantage of the gradient of a Lipschitz-differentiable objective. We make an attempt in this note to combine ideas in \cite{Nedic-SubgradientPush2015,Yin-EXTRA2015} and present our preliminary  results.

Specifically, we propose \emph{ExtraPush}, which is a two-step iteration like Extra and incorporates the push-sum protocol. At each iteration, the Extra variables are approximately normalized by the current push-sum variables.
{When the stationary distribution of the network can be easily computed,} we propose to first apply the push-sum protocol to obtain the stationary distribution and then run the two-step iteration Normalized ExtraPush. At each iteration, its running variables are normalized by the stationary distribution.

Our algorithms are essentially the same as found in the recent work by Xi and Khan {\cite{Xi-Khan2015}}. They attempted to prove convergence for a strongly convex objective function. They noticed that a certain matrix that is important to the analysis (as a part of their convergence metric) is positive semi-definite. Our analysis also uses this property. However, their analysis breaks down due to incorrect assumptions. {More specifically, each function $f_i$ is assumed in {\cite{Xi-Khan2015}} to be strongly convex and also has a bounded and Lipschitz gradient (i.e., its gradient is bounded and Lipschitz continuous). However, no function can satisfy these assumptions simultaneously since gradients of a strongly convex are strictly increasing and unbounded.}

It is  worth noting that our algorithm can be  applied to a time-varying directed network after a straightforward modification; our convergence proof, however, will need a significant change.

The rest of this note is organized as follows. Section 2 introduces the problem setup and preliminaries. Section 3 develops ExtraPush and Normalized ExtraPush.
Section 4 establishes the optimality conditions for ExtraPush and shows its convergence under the boundedness assumption.
Section 5 assumes that the objective is strongly convex and shows that Normalized ExtraPush produces a bounded sequence that converges linearly.
{Section 6 presents our numerical simulation results.
We conclude this paper in Section 7.}

\textbf{Notation:} Let ${\bf I}_n$ denote an identity matrix with the size $n\times n$, and ${\bf 1}_{n\times p} \in \mathbf{R}^{n\times p}$ denote the \emph{matrix} with all entries equal to 1. We also use ${\bf 1}_{n} \in \mathbf{R}^{n}$ as a vector of all $1$'s.
For any \emph{vector} $x$, we let $x_i$ denote its $i$th component and $\diagnal(x)$ denote the diagonal matrix generated by $x$.
For any matrix $X$, $X^T$ denotes its transpose, $X_{ij}$ denotes its $(i,j)$th component, and $\|X\| \triangleq \sqrt{\langle X, X \rangle}=\sqrt{\sum_{i,j}X_{ij}^2}$ denotes its Frobenius norm.
The largest and smallest eigenvalues of matrix $X$ are denoted as $\lambda_{\max}(X)$ and $\lambda_{\min}(X)$, respectively.
For any matrix $B\in \mathbf{R}^{m\times n}$, $\nullspc(B) \triangleq \{x\in \mathbf{R}^n| Bx=0\}$ is the null space of $B$.
{Given a matrix $B\in \mathbf{R}^{m\times n}$, by $Z\in \nullspc(B)$, we mean that
each column of $Z$ lies in $\nullspc(B)$.}
The smallest \textit{nonzero} eigenvalue of a symmetric positive semidefinite matrix $X\neq {\bf 0}$ is denoted as $\tilde{\lambda}_{\min}(X)$, which is strictly positive.
For any positive semidefinite matrix $G\in\mathbf{R}^{n\times n}$ (not necessarily symmetric in this paper), we use the notion $\|X\|_G^2 \triangleq \langle X, GX \rangle$ for a matrix $X \in \mathbf{R}^{n\times p}$.

\section{Problem Reformulation}
\label{sec:manu}

\subsection{Network}

Consider a \emph{directed} network ${\cal G} = \{V,E\}$, where $V$ is the vertex set and $E$ is the edge set.
Any edge $(i,j)\in E$ represents a directed arc from node $i$ to node $j$.
The sets of in-neighbors and out-neighbors of  node $i$ are
\begin{align*}
{\cal{N}}_i^{\mathrm{in}} \triangleq \{j: (j,i) \in E\} \cup \{i\},\quad
 {\cal{N}}_i^{\mathrm{out}} \triangleq \{j: (i,j) \in E\} \cup \{i\},
\end{align*}
respectively.
Let $d_i \triangleq |{\cal{N}}_i^{\mathrm{out}}|$ be the out-degree of node $i$.
In ${\cal G}$, {each node $i$ can only send information to its out-neighbors, \emph{not} vice versa}.

To illustrate a mixing matrix for a directed network, consider $A\in \mathbf{R}^{n\times n}$ where 
\begin{equation}
\label{Eq:MixingMatrixA}
\left\{
\begin{array}{ll}
A_{ij}>0, & \text{if}\ j\in {\cal N}_i^{\mathrm{in}}\\
A_{ij}=0, & \text{otherwise.}
\end{array}%
\right.
\end{equation}%
The entries $A_{ij}$ satisfy that, for each node $j$,  $\sum_{i\in V}A_{ij}=1$.
An example is the following mixing matrix
\begin{equation}
\label{Eq:MixingMatrixA*}
A_{ij} =
\left\{
\begin{array}{ll}
1/d_j, & \text{if}\ j\in {\cal N}_i^{in}\\
0, & \text{otherwise,}
\end{array}%
\right.
\end{equation}%
$i,j=1,\ldots, n$, which is used in the subgradient-push method {\cite{Nedic-SubgradientPush2015}}.
See Fig. {\ref{Fig:Network}} for a directed graph ${\cal G}$  and an example of its mixing matrix $A$.
The matrix $A$ is column stochastic and  asymmetric in general.

\begin{figure}[!thb]
\centering
\begin{minipage}[t][][c]{.35\textwidth}
\centering
\includegraphics[scale=0.52]{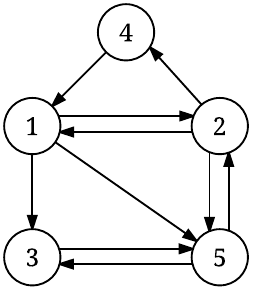}
\end{minipage}
\hspace{10pt}
\begin{minipage}[t][][b]{.35\textwidth}
$\displaystyle
A = \left(
\begin{array}{ccccc}
\frac{1}{4} &\frac{1}{4} & 0 & \frac{1}{2} & 0\\[2pt]
\frac{1}{4} &\frac{1}{4} & 0 & 0 & \frac{1}{3}\\[2pt]
\frac{1}{4}  &0 & \frac{1}{2} & 0 & \frac{1}{3} \\[2pt]
0 &\frac{1}{4} & 0 & \frac{1}{2} & 0\\[2pt]
\frac{1}{4} & \frac{1}{4} & \frac{1}{2} & 0 & \frac{1}{3}\\
\end{array}
\right)$
\end{minipage}\vskip 2mm
\caption{ A directed graph ${\cal G}$ (left) and its mixing matrix $A$ (right).}\vskip -4mm
\label{Fig:Network}
\end{figure}

\begin{Assump}
\label{Assump:Network}
The graph ${\cal G}$ is strongly connected.
\end{Assump}

\begin{Prop}
\label{Prop:A}
Under Assumption {\rm{\ref{Assump:Network}}}, the followings hold (parts (i) and (iv) are results in \cite[Corollary 2]{Nedic-SubgradientPush2015}):
\begin{enumerate}
\item[(i)]
Let $A^t = \overbrace{A\times A \cdots A}^t$ for any $t\in \mathbf{N}$. Then
\begin{align}
\label{Eq:LimofAt}
A^t \rightarrow \phi {\bf 1}_n^T \ \ \mathrm{geometrically\  fast\  as }\ \ t \rightarrow \infty,
\end{align}
for some \emph{stationary distribution} vector $\phi$, i.e., $\phi_i\ge 0$ and $\sum_i^n \phi_i =1$.

\item[(ii)]
$\nullspc({\bf I}_n - \phi {\bf 1}_n^T) = \nullspc({\bf I}_n - A).$

\item[(iii)]
$A\phi = \phi.$

\item[(iv)]
The quantity $\xi \triangleq \inf_t \min_{1\leq i \leq n} (A^t{\bf 1}_n)_i \geq \frac{1}{n^n}>0.$
\end{enumerate}
\end{Prop}

\begin{proof}
 Part (iii) is obvious from (ii) since $\phi \in \nullspc({\bf I}_n - \phi {\bf 1}_n^T)$ and $\sum_i^n \phi_i =1$.
Next, we show part (ii). First, let $z \in \nullspc({\bf I}_n - \phi {\bf 1}_n^T)$, which means $z=\phi {\bf 1}_n^T z$ and thus $Az = A \phi {\bf 1}_n^T z$.
By {\eqref{Eq:LimofAt}}, it is obvious that $A \phi {\bf 1}_n^T = \phi {\bf 1}_n^T$.
Therefore, $Az = \phi {\bf 1}_n^T z = z$ and hence $\nullspc({\bf I}_n - \phi {\bf 1}_n^T) \subseteq \nullspc({\bf I}_n - A).$
On the other hand,  any $z \in \nullspc({\bf I}_n - A),$ equivalently, $z=Az$, obeys $z=A^t z$ for any $t \geq 1$.
Letting $t\to\infty$, it holds that $z = \phi {\bf 1}_n^T z$, that is, $z \in \nullspc({\bf I}_n - \phi {\bf 1}_n^T).$ Therefore, part (ii) holds.\hfill$\Box$
\end{proof}

\subsection{Problem Given in the Matrix Notation}
Let $x_{(i)} \in \mathbf{R}^p$ denote the \emph{local copy} of $x$ at node $i$, and
$x_{(i)}^t$ denote its value at the $t$th iteration.
Throughout the note, we  use the following equivalent form of the problem (\ref{Eq:multi-agentOPT}) using {local copies} of the  variable $x$:
\begin{equation}\label{Eq:consensusProblem}
\Min_{\mathbf{x}}\  {\bf 1}_n^T {\bf f(x)} \triangleq \sum_{i=1}^n f_i(x_{(i)}),\quad  \mathrm{subject\  to}\ x_{(i)} = x_{(j)}, \ \forall i,j\in E,
\end{equation}
where ${\bf 1}_n \in \mathbf{R}^n$ denotes the vector with all its entries equal to 1,
$$
{\bf x} \triangleq \left(
\begin{array}{ccc}
\mbox{---} &x^T_{(1)} & \mbox{---}\\
\mbox{---} &x^T_{(2)} & \mbox{---}\\
  &\vdots &  \\
\mbox{---} &x^T_{(n)} & \mbox{---}\\
\end{array}
\right)
\in \mathbf{R}^{n\times p},\quad
{\bf f(x)} \triangleq \left(
\begin{array}{c}
f_1(x_{(1)})\\
f_2(x_{(2)})\\
\vdots   \\
f_n(x_{(n)})\\
\end{array}
\right)
\in \mathbf{R}^{n}.
$$
In addition, the gradient of ${\bf f}(\bf x)$ is
$$
{\bf \nabla f(x)} \triangleq \left(
\begin{array}{ccc}
\mbox{---} &\nabla f_1(x_{(1)})^T & \mbox{---}\\
\mbox{---} &\nabla f_2(x_{(2)})^T & \mbox{---}\\
  &\vdots &  \\
\mbox{---} &\nabla f_n(x_{(n)})^T & \mbox{---}\\
\end{array}
\right)
\in \mathbf{R}^{n\times p}.
$$
The $i$th rows of the above matrices $\mathbf{x}$ and $\nabla \mathbf{f}(\mathbf{x})$,  and vector
${\bf f(x)}$, correspond to agent $i$. For simplicity, one can treat $p=1$ throughout this paper.

For a  vector $\bar{x} \in \mathbf{R}^n$, let $\bar{x}^{\mathrm{ave}} \triangleq \frac{1}{n}(\sum_{i=1}^n \bar{x}_i)\in\mathbf{R}$.
A special case of (\ref{Eq:consensusProblem}) is the well-known average consensus problem, where
$f_i(x_{(i)}) = \frac{1}{2}(x_{(i)} - \bar{x}_{i})^2$
for each node $i$ and the solution is $x_{(i)}= \bar{x}^{\mathrm{ave}}$ for all $i$.

\section{Proposed Algorithms}

\subsection{Reviews of Extra and subgradient-push}
Extra {\cite{Yin-EXTRA2015}} is a ``two-step'' iterative algorithm for solving (\ref{Eq:consensusProblem}) over an undirected network.
Let $W \in \mathbf{R}^{n\times n}$ be a symmetric and doubly stochastic mixing matrix, and $\bar{W} \triangleq \frac{{\bf I}_n+W}{2}$.
The Extra iteration is 
\begin{equation}
\label{Eq:EXTRA}
{\bf x}^{t+2} = ({\bf I}_n + W){\bf x}^{t+1} - \bar{W} {\bf x}^t -\alpha (\nabla {\bf f}({\bf x}^{t+1}) - \nabla {\bf f}({\bf x}^{t})),\;\;\;  t=0,1,\cdots,
\end{equation}
which starts with ${\bf x}^1 = {\bf x}^0 - \alpha \nabla {\bf f}({\bf x}^0)$, any ${\bf x}^0 \in \mathbf{R}^{n\times p}$ and uses a properly bounded step size $\alpha>0$.
Extra converges at a rate $o(\frac{1}{t})$, measured by  the best running violation to the first-order optimality condition, provided that ${f}$ is Lipschitz differentiable. It improves to a linear rate of convergence if $f$ is also (restricted) strongly convex.


The subgradient-push algorithm {\cite{Nedic-SubgradientPush2015}} is proposed to solve the decentralized optimization problem \eqref{Eq:multi-agentOPT} over a time-varying directed graph.
It is a combination of the subgradient method and the push-sum protocol \cite{Kempe-Push-sumw2003,Benezit-WeightedGossip2010,Iutzeler-sum-weight2013}.
Let $A(t)$ be the mixing matrix at the $t$th iteration as defined in \eqref{Eq:MixingMatrixA*} for a time-varying directed network.
The iteration of subgradient-push is
\begin{equation}
\label{Eq:subgradient-push}
\left\{
\begin{array}{l}
{\bf z}^{t+1} = A(t){\bf z}^{t} - \alpha_t \nabla {\bf f}({\bf x}^{t}),\\
{\bf w}^{t+1} = A(t){\bf w}^{t},\\
{\bf x}^{t+1} = \diagnal({\bf w}^{t+1})^{-1}{\bf z}^{t+1},
\end{array}%
\right.
\end{equation}%
where $\alpha_t>0$ is the step size at the $t$th iteration that decays as follows: $\sum_{t=1}^{\infty} \alpha_t = \infty$, $\sum_{t=1}^{\infty} \alpha_t^2 < \infty,$ and $\alpha_t \leq \alpha_s$ for all $t>s \geq 1.$ It is shown in \cite{Nedic-SubgradientPush2015} that the convergence rate of subgradient-push algorithm is $O(\ln t/\sqrt{t})$.

\subsection{Proposed: ExtraPush}

ExtraPush combines the above two algorithms. Specifically, set arbitrary ${\bf z}^0$ and ${\bf w}^0 = {\bf 1}_{n}$; set ${\bf x}^0 = {\bf z}^0$;
for $t=1$, set ${\bf w}^1 = A{\bf w}^0$, ${\bf z}^1 = A{\bf z}^0 -\alpha \nabla {\bf f}({\bf x}^0),$
and ${\bf x}^1 = \diagnal({\bf w}^1)^{-1}{\bf z}^1$.
Letting  $\bar{A}\triangleq \frac{{\bf I}_n +A}{2}$, for $t=2,3,\ldots,$ perform
\begin{equation}
\label{Eq:EXTRA-push}
\left\{
\begin{array}{l}
{\bf z}^{t} = (A+{\bf I}_n){\bf z}^{t-1} - \bar{A}{\bf z}^{t-2} - \alpha (\nabla {\bf f}({\bf x}^{t-1}) - \nabla {\bf f}({\bf x}^{t-2})),\\
{\bf w}^{t} = A{\bf w}^{t-1},\\
{\bf x}^{t} = \diagnal({\bf w}^t)^{-1}{\bf z}^t.
\end{array}%
\right.
\end{equation}%
By the structure of $A$,  each node $i$ broadcasts its  $z_{(i)}$ to its out-neighbors at each ExtraPush iteration. The step size $\alpha>0$ needs to be properly set.
The iteration \eqref{Eq:EXTRA-push} of ExtraPush can be implemented at each agent $i$ as follows:
\begin{equation*}
\left\{
\begin{array}{l}
z_{(i)}^{t} = z_{(i)}^{t-1} + \sum_{j\in {\cal N}_i^{\mathrm{in}}}A_{ij}z_{(j)}^{t-1}   - \sum_{j\in {\cal N}_i^{\mathrm{in}}}\bar{A}_{ij}z_{(j)}^{t-2} - \alpha (\nabla f_i(x_{(i)}^{t-1}) - \nabla f_i(x_{(i)}^{t-2})),\\
w_i^{t} = \sum_{j\in {\cal N}_i^{\mathrm{in}}}A_{ij}w_j^{t-1},\\
x_{(i)}^{t} = \frac{z_{(i)}^{t}}{w_i^{t}},
\end{array}%
\right.
\end{equation*}%
where $\bar{A}_{ij}$ is the $(i,j)$th component of $\bar{A}$, and $w_i^t$ is the $i$th component of ${\bf w}^t$, for all $i,j$.

\subsection{Proposed: Normalized ExtraPush}

Normalized ExtraPush first computes the stationary distribution $\phi$ of $A$ and saves each $\phi_i$ at node $i$.
Next, in the main iteration,  the ${\bf w}$-step from {\eqref{Eq:EXTRA-push}}
is removed, and  $n\cdot\phi$ instead of ${\bf w}^{t}$ is used to obtain ${\bf x}^{t}$.
As such, the main iteration of Normalized ExtraPush simplifies \eqref{Eq:EXTRA-push}.
Letting, $$D \triangleq n\diagnal(\phi).$$
the  iteration of Normalized ExtraPush proceeds as follows:
set arbitrary ${\bf z}^0$ and ${\bf x}^0 = D^{-1}{\bf z}^0$;
for $t=1$, set ${\bf z}^1 = A{\bf z}^0 -\alpha \nabla {\bf f}({\bf x}^0)$
and ${\bf x}^1 = D^{-1}{\bf z}^1$.
For $t=2,3,\ldots,$ perform
\begin{equation}
\label{Eq:GEXTRA}
\left\{
\begin{array}{l}
{\bf z}^{t} = (A+{\bf I}_n){\bf z}^{t-1} - \bar{A}{\bf z}^{t-2} - \alpha (\nabla {\bf f}({\bf x}^{t-1}) - \nabla {\bf f}({\bf x}^{t-2})),\\
{\bf x}^{t} = D^{-1}{\bf z}^{t}.
\end{array}%
\right.
\end{equation}%
At each agent $i$, the iterate \eqref{Eq:GEXTRA} of Normalized ExtraPush can be implemented as follows:
\begin{equation*}
\left\{
\begin{array}{l}
z_{(i)}^{t} = z_{(i)}^{t-1} + \sum_{j\in {\cal N}_i^{\mathrm{in}}}A_{ij}z_{(j)}^{t-1} - \sum_{j\in {\cal N}_i^{\mathrm{in}}}\bar{A}_{ij}z_{(j)}^{t-2} - \alpha (\nabla f_i(x_{(i)}^{t-1}) - \nabla f_i(x_{(i)}^{t-2})),\\
x_{(i)}^{t} = \frac{z_{(i)}^{t}}{n \phi_i}.
\end{array}%
\right.
\end{equation*}%

Next, we present two equivalent forms of Normalized ExtraPush.
Letting ${\bf f}_{\phi}({\bf z}) \triangleq D{\bf f}(D^{-1}{\bf z}),$
we have $\nabla {\bf f}_{\phi}({\bf z}) = \nabla {\bf f}(D^{-1}{\bf z}).$
Substituting the ${\bf x}$-step of (\ref{Eq:GEXTRA}) into its ${\bf z}$-step yields the single-value iteration:
\begin{equation}
\label{Eq:GEXTRA1}
{\bf z}^{t} = (A+{\bf I}_n){\bf z}^{t-1} - \bar{A}{\bf z}^{t-2} - \alpha (\nabla {\bf f}_{\phi}({\bf z}^{t-1}) - \nabla {\bf f}_{\phi}({\bf z}^{t-2})).
\end{equation}
Upon stopping, one shall return ${\bf x}^{t} = D^{-1}{\bf z}^{t}$. The iteration ({\ref{Eq:GEXTRA1}}) is nearly identical to the Extra iteration \eqref{Eq:EXTRA} except that  \eqref{Eq:EXTRA} must use a doubly-stochastic matrix. 

Letting $A_{\phi} \triangleq D^{-1}AD$
and $\bar{A}_{\phi} \triangleq \frac{1}{2}({\bf I}_n+A_{\phi})$, which are row stochastic matrices,
gives another equivalent form of Normalized ExtraPush
\begin{equation}
\label{Eq:GEXTRA2}
{\bf x}^{t} = (A_{\phi}+{\bf I}_n){\bf x}^{t-1} - \bar{A}_{\phi}{\bf x}^{t-2} - \alpha D^{-1} (\nabla {\bf f}({\bf x}^{t-1}) - \nabla {\bf f}({\bf x}^{t-2})),
\end{equation}
which, compared to the Extra iteration \eqref{Eq:EXTRA}, has the extra diagonal matrix $D^{-1}$. Indeed, this iteration generalizes Extra to use row-stochastic  matrices $A_{\phi}$ and $\bar{A}$.

\section{Preliminary Analysis of ExtraPush}

In this section, we first develop the first-order optimality conditions for the problem \eqref{Eq:consensusProblem}
and then provide the convergence of ExtraPush under the boundedness assumption.

\begin{Thm}
\label{Thm:1stOrderOptCond}
{\bf (first-order optimality conditions)}
Suppose that graph ${\cal G}$ is strongly connected.
Then ${\bf x}^*$ is consensual and $x^*_{(1)} \equiv x^*_{(2)} \equiv \cdots \equiv x^*_{(n)}$ is an optimal solution of $(\ref{Eq:multi-agentOPT})$
if and only if, for some $\alpha>0$, there exist ${\bf z}^* \in \nullspc({\bf I}_n - A)$ and ${\bf y}^* \in \nullspc({\bf 1}_n^T)$
such that the following conditions hold
\begin{equation}
\label{Eq:1stOrderOpt}
\left\{
\begin{array}{l}
{\bf y}^* + \alpha \nabla {\bf f}({\bf x}^*) = 0,\\
{\bf x}^* = D^{-1} {\bf z}^*.
\end{array}%
\right.
\end{equation}%
{\rm(}We let ${\cal L}^*$ denote the set of triples $({\bf z}^*, {\bf y}^*, {\bf x}^*)$ satisfying the above conditions.{\rm)}
\end{Thm}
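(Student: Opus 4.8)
The plan is to show that the two conditions in \eqref{Eq:1stOrderOpt} separately encode the two defining properties of an optimal consensual point: the relation $\mathbf{x}^* = D^{-1}\mathbf{z}^*$ with $\mathbf{z}^* \in \nullspc(\mathbf{I}_n - A)$ will encode consensus, while $\mathbf{y}^* + \alpha \nabla \mathbf{f}(\mathbf{x}^*) = 0$ with $\mathbf{y}^* \in \nullspc(\mathbf{1}_n^T)$ will encode the first-order stationarity $\sum_i \nabla f_i(x^*) = 0$. The engine behind the first encoding is Proposition \ref{Prop:A}: parts (ii) and (iii) give $\nullspc(\mathbf{I}_n - A) = \nullspc(\mathbf{I}_n - \phi\mathbf{1}_n^T)$, and a direct computation shows this null space, taken columnwise, is exactly $\spn(\phi)$. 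Since ${\cal G}$ is strongly connected, $\phi$ is the Perron vector of an irreducible matrix, so $\phi_i > 0$ for every $i$ and $D = n\diagnal(\phi)$ is invertible; moreover $D^{-1}\phi = \frac{1}{n}\mathbf{1}_n$, the key identity that converts a $\phi$-scaled vector into the consensus direction.

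For the forward direction, I would start from a consensual optimal $\mathbf{x}^* = \mathbf{1}_n (x^*)^T$, set $\mathbf{z}^* \triangleq D\mathbf{x}^* = n\phi (x^*)^T$, and observe that each column of $\mathbf{z}^*$ is a multiple of $\phi$, so that both $\mathbf{z}^* \in \nullspc(\mathbf{I}_n - A)$ and $\mathbf{x}^* = D^{-1}\mathbf{z}^*$ hold by construction. Setting $\mathbf{y}^* \triangleq -\alpha\nabla\mathbf{f}(\mathbf{x}^*)$ makes the first equation in \eqref{Eq:1stOrderOpt} trivially true; the only thing left to verify is $\mathbf{y}^* \in \nullspc(\mathbf{1}_n^T)$, i.e. $\mathbf{1}_n^T \nabla\mathbf{f}(\mathbf{x}^*) = \sum_i \nabla f_i(x^*) = 0$, which is precisely the first-order condition for $x^*$ to minimize $f = \sum_i f_i$; by convexity of each $f_i$ this condition is both necessary and sufficient for optimality.

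For the reverse direction, I would run the argument backwards: given $\mathbf{z}^* \in \nullspc(\mathbf{I}_n - A) = \spn(\phi)$, write $\mathbf{z}^* = \phi s^T$, so that $\mathbf{x}^* = D^{-1}\mathbf{z}^* = \frac{1}{n}\mathbf{1}_n s^T$ is consensual with common value $x^* = \frac{1}{n}s$. Then $\mathbf{y}^* \in \nullspc(\mathbf{1}_n^T)$ together with the first equation forces $\sum_i \nabla f_i(x^*) = \mathbf{1}_n^T \nabla\mathbf{f}(\mathbf{x}^*) = -\frac{1}{\alpha}\mathbf{1}_n^T \mathbf{y}^* = 0$, and convexity again upgrades this stationarity to global optimality of $x^*$ for \eqref{Eq:multi-agentOPT}.

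I expect no deep obstacle here; once the null-space characterization is in place the proof is essentially bookkeeping. The one point demanding care is the invertibility and action of $D^{-1}$: everything hinges on $\phi_i > 0$ (so that $D^{-1}$ exists) and on the identity $D^{-1}\phi = \frac{1}{n}\mathbf{1}_n$, which is what makes the $\phi$-weighting in $\mathbf{z}$-space correspond exactly to consensus in $\mathbf{x}$-space. I would state the positivity of $\phi$ explicitly, invoking strong connectivity, before any use of $D^{-1}$.
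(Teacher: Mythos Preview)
Your proposal is correct and follows essentially the same route as the paper: both directions hinge on Property~\ref{Prop:A}(ii) to identify $\nullspc(\mathbf{I}_n - A)$ with the span of $\phi$, then set $\mathbf{z}^* = D\mathbf{x}^*$ and $\mathbf{y}^* = -\alpha\nabla\mathbf{f}(\mathbf{x}^*)$ in the forward direction, and read off consensus from $\mathbf{x}^* = D^{-1}\mathbf{z}^* = \frac{1}{n}\mathbf{1}_n\mathbf{1}_n^T\mathbf{z}^*$ plus optimality from $\mathbf{1}_n^T\mathbf{y}^* = 0$ in the reverse direction. Your explicit attention to $\phi_i > 0$ (hence $D^{-1}$ exists) and to convexity upgrading stationarity to global optimality are points the paper leaves implicit, so your write-up is, if anything, slightly more complete.
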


\begin{proof}
Assume that ${\bf x}^*$ is consensual and $x^*_{(1)} \equiv x^*_{(2)} \equiv \cdots \equiv x^*_{(n)}$ is optimal.
Let ${\bf z}^* = n \diagnal(\phi) {\bf x}^* = n( \phi x^{*T}_{(1)}).$
Then $\phi {\bf 1}_n^T {\bf z}^* = \phi {\bf 1}_n^T n \phi x^{*T}_{(1)} = n \phi x^{*T}_{(1)}= {\bf z}^*.$
It implies that ${\bf z}^* \in \nullspc({\bf I} - \phi {\bf 1}_n^T)$.
By Property {\ref{Prop:A}} (ii), it follows that ${\bf z}^* \in \nullspc({\bf I}_n - A)$.
Moreover, letting ${\bf y}^* = -\alpha \nabla {\bf f}({\bf x}^*),$ it holds that ${\bf 1}_n^T {\bf y}^* = -\alpha {\bf 1}_n^T \nabla {\bf f}({\bf x}^*) = 0$, that is, ${\bf y}^* \in \nullspc({\bf 1}_n^T)$.

On the other hand, assume (\ref{Eq:1stOrderOpt}) holds. By Property {\ref{Prop:A}} (ii), it follows that ${\bf z}^* = \phi {\bf 1}_n^T {\bf z}^*.$
Plugging ${\bf x}^* = D^{-1} {\bf z}^*$ gives ${\bf x}^* = \frac{1}{n}{\bf 1}_n{\bf 1}_n^T {\bf z}^*,$
which implies that ${\bf x}^*$ is consensual. Moreover, by ${\bf y}^* + \alpha \nabla {\bf f}({\bf x}^*) = 0$
and ${\bf y}^* \in \nullspc({\bf 1}_n^T)$, it holds ${\bf 1}_n^T \nabla {\bf f}({\bf x}^*) = -\frac{1}{\alpha}{\bf 1}_n^T{\bf y}^* =0$,
which implies that ${\bf x}^*$ is optimal.\hfill$\Box$
\end{proof}

Introducing the sequence
\begin{equation}
\label{Eq:ut}
{\bf y}^t \triangleq \sum_{k=0}^t (\bar{A}-A){\bf z}^k,
\end{equation}
the iteration (\ref{Eq:EXTRA-push}) of ExtraPush can be rewritten as
\begin{equation}
\left\{
\begin{array}{l}
\bar{A}{\bf z}^{t+1} = \bar{A}{\bf z}^t -\alpha \nabla {\bf f}({\bf x}^t) - {\bf y}^{t+1},\\
{\bf y}^{t+1} = {\bf y}^t + (\bar{A}-A){\bf z}^{t+1},\\
{\bf w}^{t+1} = A{\bf w}^{t},\\
{\bf x}^{t+1} = \diagnal({\bf w}^{t+1})^{-1}{\bf z}^{t+1}.
\end{array}%
\right.
\label{Eq:Var1ofEXTRAPush}
\end{equation}

\begin{Thm}
\label{Thm:LimitPoint_Optimality}
Suppose that the sequence $\{{\bf x}^t\}$ generated by ExtraPush {\eqref{Eq:EXTRA-push}} and the sequence $\{{\bf y}^t\}$ defined in {\eqref{Eq:ut}} are bounded. Then, any limit point of $\{({\bf z}^t, {\bf y}^t, {\bf x}^t)\}$, denoted by $({\bf z}^*, {\bf y}^*, {\bf x}^*)$, satisfies the optimality conditions $(\ref{Eq:1stOrderOpt})$.
\end{Thm}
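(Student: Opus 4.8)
The plan is to combine a compactness argument with a passage to the limit in the two-variable reformulation \eqref{Eq:Var1ofEXTRAPush}. First I would record the consequences of Property \ref{Prop:A}: since $\mathbf{w}^t = A^t\mathbf{1}_n \to n\phi$ geometrically by \eqref{Eq:LimofAt} and $\inf_t\min_i(\mathbf{w}^t)_i \ge \xi > 0$ by part (iv), the matrices $\diagnal(\mathbf{w}^t)^{-1}$ are well defined for all $t$ and converge to $D^{-1}$. Because $\{\mathbf{x}^t\}$ is assumed bounded and $\mathbf{z}^t = \diagnal(\mathbf{w}^t)\mathbf{x}^t$, the sequence $\{\mathbf{z}^t\}$ is bounded as well; together with the assumed boundedness of $\{\mathbf{y}^t\}$, the triple $\{(\mathbf{z}^t,\mathbf{y}^t,\mathbf{x}^t)\}$ lies in a compact set, so limit points exist. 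I fix a subsequence along which $(\mathbf{z}^{t_k},\mathbf{y}^{t_k},\mathbf{x}^{t_k}) \to (\mathbf{z}^*,\mathbf{y}^*,\mathbf{x}^*)$.

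Two of the conditions defining $\mathcal{L}^*$ come essentially for free. From $\mathbf{z}^t - D\mathbf{x}^t = (\diagnal(\mathbf{w}^t) - D)\mathbf{x}^t \to 0$ (a bounded factor times a vanishing one) I would get $\mathbf{z}^* = D\mathbf{x}^*$, i.e. $\mathbf{x}^* = D^{-1}\mathbf{z}^*$. For the membership $\mathbf{y}^* \in \nullspc(\mathbf{1}_n^T)$ I use that $A$ is column stochastic: $\mathbf{1}_n^T(\bar{A}-A) = \tfrac12\mathbf{1}_n^T(\mathbf{I}_n - A) = 0$, so from \eqref{Eq:ut} we have $\mathbf{1}_n^T\mathbf{y}^t = \sum_{k=0}^t \mathbf{1}_n^T(\bar{A}-A)\mathbf{z}^k = 0$ for every $t$, hence $\mathbf{1}_n^T\mathbf{y}^* = 0$.

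The remaining two conditions come from passing to the limit in the iteration, for which the key identities are $\mathbf{y}^t - \mathbf{y}^{t-1} = (\bar{A}-A)\mathbf{z}^t = \tfrac12(\mathbf{I}_n - A)\mathbf{z}^t$ and the single-step relation $\mathbf{z}^{t+1} = \bar{A}\mathbf{z}^t - \alpha\nabla\mathbf{f}(\mathbf{x}^t) - \mathbf{y}^t$, the latter obtained by eliminating $\mathbf{y}^{t+1}$ from \eqref{Eq:Var1ofEXTRAPush} and using $2\bar{A}-A = \mathbf{I}_n$. Granting that the increments $\mathbf{y}^{t+1}-\mathbf{y}^t$ and $\mathbf{z}^{t+1}-\mathbf{z}^t$ tend to $0$, the first identity forces $\tfrac12(\mathbf{I}_n-A)\mathbf{z}^* = 0$, so $A\mathbf{z}^* = \mathbf{z}^*$ and $\mathbf{z}^* \in \nullspc(\mathbf{I}_n - A)$; this in turn gives $\bar{A}\mathbf{z}^* = \mathbf{z}^*$, so letting $k\to\infty$ in the single-step relation (with $\mathbf{z}^{t_k+1}\to\mathbf{z}^*$ once the increments vanish, and $\nabla\mathbf{f}$ continuous on the bounded orbit) yields $\mathbf{z}^* = \mathbf{z}^* - \alpha\nabla\mathbf{f}(\mathbf{x}^*) - \mathbf{y}^*$, i.e. $\mathbf{y}^* + \alpha\nabla\mathbf{f}(\mathbf{x}^*) = 0$. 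Together with the previous paragraph, $(\mathbf{z}^*,\mathbf{y}^*,\mathbf{x}^*)$ satisfies \eqref{Eq:1stOrderOpt}.

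The main obstacle is precisely the claim that the increments vanish. Boundedness of $\{\mathbf{y}^t\}$ alone does not give $\mathbf{y}^{t+1}-\mathbf{y}^t\to 0$, since $\mathbf{y}^t$ is a partial sum $\sum_k(\bar{A}-A)\mathbf{z}^k$ and bounded partial sums need not have vanishing terms. I would instead obtain the increment estimate from a Lyapunov/Fej\'er-monotonicity argument adapted to the asymmetric, merely column-stochastic $A$: using the similarity $A_\phi = D^{-1}AD$ (which is row stochastic, cf.\ \eqref{Eq:GEXTRA2}) to symmetrize the relevant quadratic form, build an energy of the type $\|\mathbf{z}^t - \mathbf{z}^*\|_G^2$ with the positive semidefinite matrix $G$ highlighted in the introduction, and show it is nonincreasing up to summable terms, so that $\sum_t\|\mathbf{z}^{t+1}-\mathbf{z}^t\|^2 < \infty$ and hence $\mathbf{z}^{t+1}-\mathbf{z}^t\to 0$ and $\mathbf{y}^{t+1}-\mathbf{y}^t = (\bar{A}-A)\mathbf{z}^{t+1}\to 0$. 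The asymmetry of $A$ is exactly what makes this energy estimate delicate and is the reason the theorem is stated under the standing boundedness hypothesis; every other step is continuity and elementary linear algebra.
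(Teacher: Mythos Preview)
Your argument tracks the paper's own proof closely through the first two paragraphs: the boundedness of $\{\mathbf{z}^t\}$ via $\mathbf{z}^t=\diagnal(\mathbf{w}^t)\mathbf{x}^t$, the limit $\mathbf{w}^t\to n\phi$ giving $\mathbf{x}^*=D^{-1}\mathbf{z}^*$, and the column-stochasticity argument yielding $\mathbf{1}_n^T\mathbf{y}^t\equiv 0$ are exactly what the paper does. The paper then obtains $\mathbf{z}^*\in\nullspc(\mathbf{I}_n-A)$ and $\mathbf{y}^*+\alpha\nabla\mathbf{f}(\mathbf{x}^*)=0$ by simply ``letting $t\to\infty$'' in the second and first lines of \eqref{Eq:Var1ofEXTRAPush}, respectively---that is, it treats the passage to the limit along the subsequence as immediate and does \emph{not} supply any argument that $\mathbf{y}^{t+1}-\mathbf{y}^t\to 0$ or $\mathbf{z}^{t+1}-\mathbf{z}^t\to 0$.

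So the ``main obstacle'' you flag is real, but the paper does not address it; it simply asserts the limit relations. Your proposed remedy---a Fej\'er/Lyapunov energy estimate in a $G$-weighted norm to force $\sum_t\|\mathbf{z}^{t+1}-\mathbf{z}^t\|^2<\infty$---is precisely the machinery of Section~5, and there it requires Assumptions~\ref{Assump:ObjFun} and~\ref{Assump:WeightingMatrices} (Lipschitz gradients, quasi-strong convexity, and $D^{-1}\bar{A}+\bar{A}^TD^{-1}\succ 0$), none of which are hypotheses of Theorem~\ref{Thm:LimitPoint_Optimality}. Under only the boundedness assumed here, that energy argument is not available, so your proposed patch would not go through as stated. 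In short: your proof coincides with the paper's up to and including the point where you pause; the paper then proceeds without the justification you (rightly) ask for, and the Lyapunov route you sketch to supply it imports hypotheses the theorem does not grant.
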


\begin{proof}
By Property {\ref{Prop:A}},  $\{{\bf w}^t\}$ is bounded. By the last update of \eqref{Eq:Var1ofEXTRAPush} and the boundedness of both $\{{\bf x}^t\}$ and $\{{\bf w}^t\}$, $\{{\bf z}^t\}$ is bounded.
Hence, there exists a convergent subsequence $\{({\bf z}, {\bf y}, {\bf w}, {\bf x})^{t_j}\}_{j=1}^{\infty}$.
Let $({\bf z}^*, {\bf y}^*, {\bf w}^*, {\bf x}^*)$ be its limit.
By {\eqref{Eq:LimofAt}}, we know that ${\bf w}^* = n \phi$ and thus that ${\bf x}^* = D^{-1} {\bf z}^*.$
Letting $t\to\infty$ in the second equation of (\ref{Eq:Var1ofEXTRAPush}) gives ${\bf z}^* = A{\bf z}^*$, or equivalently ${\bf z}^* \in \nullspc({\bf I}_n - A)$.
Similarly, letting  $t\to\infty$ in the first equation of (\ref{Eq:Var1ofEXTRAPush}) yields ${\bf y}^* + \alpha \nabla {\bf f}({\bf x}^*) = 0.$
Moreover, from the definition {\eqref{Eq:ut}} of ${\bf y}^t$ and the facts that both $A$ and ${\bar{A}}$ are column stochastic, it follows that ${\bf 1}_n^T {\bf y}^* =0$ and ${\bf 1}_n^T \nabla {\bf f}({\bf x}^*) =0$.
Therefore, $({\bf z}^*, {\bf y}^*, {\bf x}^*)$ satisfies the optimality conditions (\ref{Eq:1stOrderOpt}).\hfill$\Box$
\end{proof}

\section{Convergence of Normalized ExtraPush}

In this section, we show the linear convergence of Normalized ExtraPush under the smoothness and strong convexity assumptions of the objective function.
Similar to (\ref{Eq:Var1ofEXTRAPush}), introducing a new sequence ${\bf y}^t = \sum_{k=0}^t (\bar{A}-A){\bf z}^k,$
 the iterative formula (\ref{Eq:GEXTRA}) of Normalized ExtraPush implies
\begin{equation}
\left\{
\begin{array}{l}
\bar{A}{\bf z}^{t+1} = \bar{A}{\bf z}^t -\alpha \nabla {\bf f}({\bf x}^t) - {\bf y}^{t+1},\\
{\bf y}^{t+1} = {\bf y}^t + (\bar{A}-A){\bf z}^{t+1},\\
{\bf x}^{t+1} = D^{-1} {\bf z}^{t+1}.
\end{array}%
\right.
\label{Eq:GEXTRA0}
\end{equation}

\begin{Thm}
\label{Thm:LimitPoint_GEXTRA}
Suppose that the sequence $\{{\bf x}^t\}$ generated by Normalized ExtraPush {\eqref{Eq:GEXTRA}} is bounded,
and that the sequence $\{{\bf y}^t\}$ is also bounded.
Then, any limit point of $\{({\bf z}^t, {\bf y}^t, {\bf x}^t)\}_{t=0}^{\infty}$, denoted by $({\bf z}^*, {\bf y}^*, {\bf x}^*)$,
satisfies the first-order optimality conditions $(\ref{Eq:1stOrderOpt})$.
\end{Thm}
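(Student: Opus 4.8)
The plan is to mirror the proof of Theorem \ref{Thm:LimitPoint_Optimality}, exploiting that the Normalized ExtraPush form \eqref{Eq:GEXTRA0} is structurally simpler than its ExtraPush counterpart: it carries no ${\bf w}$-variable and instead couples ${\bf x}$ and ${\bf z}$ through the \emph{fixed} positive diagonal matrix $D = n\,\diagnal(\phi)$, which is invertible because $\phi$ is the Perron vector of the irreducible matrix $A$ and is therefore strictly positive. First I would upgrade the hypotheses to boundedness of $\{{\bf z}^t\}$. Since ${\bf z}^t = D{\bf x}^t$ with $\{{\bf x}^t\}$ bounded and $D$ a fixed matrix, $\{{\bf z}^t\}$ is bounded at once, with no analog of the $\{{\bf w}^t\}$ argument of Theorem \ref{Thm:LimitPoint_Optimality} needed. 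Together with the assumed boundedness of $\{{\bf y}^t\}$, the triple sequence $\{({\bf z}^t,{\bf y}^t,{\bf x}^t)\}$ is bounded, so by Bolzano--Weierstrass it admits a convergent subsequence $\{({\bf z}^{t_j},{\bf y}^{t_j},{\bf x}^{t_j})\}$ with limit $({\bf z}^*,{\bf y}^*,{\bf x}^*)$.

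It then remains to verify the four ingredients of \eqref{Eq:1stOrderOpt}. The relation ${\bf x}^* = D^{-1}{\bf z}^*$ is immediate on passing to the limit in the identity ${\bf x}^{t_j} = D^{-1}{\bf z}^{t_j}$. For ${\bf y}^* \in \nullspc({\bf 1}_n^T)$ I would use column-stochasticity of $A$: since ${\bf 1}_n^T A = {\bf 1}_n^T$ and $\bar{A} = \frac{1}{2}({\bf I}_n + A)$, we have ${\bf 1}_n^T(\bar{A} - A) = \frac{1}{2}{\bf 1}_n^T({\bf I}_n - A) = 0$, so ${\bf 1}_n^T{\bf y}^t = \sum_{k=0}^t {\bf 1}_n^T(\bar{A} - A){\bf z}^k = 0$ for every $t$, whence ${\bf 1}_n^T{\bf y}^* = 0$. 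The remaining two conditions come from passing to the limit in the coupled system \eqref{Eq:GEXTRA0} (whose legitimacy is the delicate point addressed below): the second line, rewritten as $(\bar{A} - A){\bf z}^{t+1} = {\bf y}^{t+1} - {\bf y}^t$ with $\bar{A} - A = \frac{1}{2}({\bf I}_n - A)$, yields $({\bf I}_n - A){\bf z}^* = 0$, i.e. ${\bf z}^* \in \nullspc({\bf I}_n - A)$; the first line, together with continuity of $\nabla{\bf f}$, yields $\bar{A}{\bf z}^* = \bar{A}{\bf z}^* - \alpha\nabla{\bf f}({\bf x}^*) - {\bf y}^*$, hence ${\bf y}^* + \alpha\nabla{\bf f}({\bf x}^*) = 0$. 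Feeding these into \eqref{Eq:1stOrderOpt}, and thus Theorem \ref{Thm:1stOrderOptCond}, certifies that $({\bf z}^*,{\bf y}^*,{\bf x}^*)$ is optimal.

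The step I expect to be the crux is exactly this limit-passage in the first two lines of \eqref{Eq:GEXTRA0}, because those equations couple indices $t$ and $t+1$: along the subsequence I only know that $({\bf z}^{t_j},{\bf y}^{t_j},{\bf x}^{t_j})$ converges, whereas the equations also involve the \emph{shifted} iterates ${\bf z}^{t_j+1}$ and ${\bf y}^{t_j+1}$. To close the argument I must show the shifted iterates converge to the \emph{same} limit $({\bf z}^*,{\bf y}^*,{\bf x}^*)$. The cleanest route is to prove that the consecutive differences vanish, namely ${\bf z}^{t+1} - {\bf z}^t \to 0$ and ${\bf y}^{t+1} - {\bf y}^t = (\bar{A} - A){\bf z}^{t+1} \to 0$; here the natural lever is the boundedness of $\{{\bf y}^t\}$, which controls the telescoping sum ${\bf y}^t = \sum_{k=0}^t(\bar{A} - A){\bf z}^k$, though one must be careful since bounded partial sums alone do not force the summands to zero. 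Failing a direct difference-vanishing argument, I would pass to a further subsequence along which ${\bf z}^{t_j+1},{\bf y}^{t_j+1},{\bf x}^{t_j+1}$ also converge, extract from \eqref{Eq:GEXTRA0} the relations linking the two limits, and use the boundedness of $\{{\bf y}^t\}$ to force the accumulated increments to cancel and identify the limits. This is precisely the place where boundedness of $\{{\bf y}^t\}$, and not merely of $\{{\bf x}^t\}$, is indispensable.
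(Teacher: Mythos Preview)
Your approach is precisely what the paper does: it states that the proof is ``very similar to that of Theorem~\ref{Thm:LimitPoint_Optimality}'' with $\{{\bf w}^t\}$ replaced by its limit $n\phi$, and omits the details. Your write-up in fact supplies more detail than the paper's proof of Theorem~\ref{Thm:LimitPoint_Optimality} itself, which simply writes ``letting $t\to\infty$'' in the first two lines of \eqref{Eq:Var1ofEXTRAPush} without comment.

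The concern you flag about the coupled indices $t$ and $t{+}1$ is legitimate and is \emph{not} addressed in the paper's argument either. The paper's proof of Theorem~\ref{Thm:LimitPoint_Optimality} tacitly assumes that along the chosen subsequence the shifted iterates $({\bf z}^{t_j+1},{\bf y}^{t_j+1})$ converge to the same limit $({\bf z}^*,{\bf y}^*)$, which is exactly what you point out requires justification: boundedness of $\{{\bf y}^t\}$ gives bounded partial sums $\sum_{k\le t}(\bar A-A){\bf z}^k$, but this alone does not force $(\bar A-A){\bf z}^{t+1}\to 0$ or ${\bf z}^{t+1}-{\bf z}^t\to 0$. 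Your fallback of passing to a further subsequence so that the $(t_j{+}1)$-iterates also converge yields two a~priori distinct limits linked by \eqref{Eq:GEXTRA0}, and identifying them is not automatic from the stated hypotheses. In short, you have matched the paper's intended proof and have been more scrupulous than the paper in isolating the one step that is asserted rather than proved; the paper offers no additional mechanism here beyond what you already wrote.
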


The proof is very similar to that of Theorem {\ref{Thm:LimitPoint_Optimality}}.
It only needs to replace the sequence $\{{\bf w}^t\}$ with its limitation $n\phi$ in the proof procedure,
thus we omit it here.
From Theorem {\ref{Thm:LimitPoint_GEXTRA}}, it shows that Normalized ExtraPush
has subsequence convergence to an optimal solution of the considered optimization problem under the boundedness assumption.
To obtain the linear convergence of Normalized ExtraPush, we still need the following assumptions.

\begin{Assump}
\label{Assump:ExisOptSolution}
{\bf (existence of solution)}
Let ${\cal X}^*$ be the optimal solution set of problem \eqref{Eq:multi-agentOPT}, and   assume that  ${\cal X}^*$ is nonempty.
\end{Assump}

\begin{Assump}
\label{Assump:ObjFun}
For each agent $i$, its objective function $f_i$ satisfies the following:
\begin{enumerate}
\item[(i)] {\bf (Lipschitz differentiability)}
$f_i$ is  differentiable, and its gradient $\nabla f_i$ is $L_i$-Lipschitz continuous, i.e.,
$\|\nabla f_i(x)-\nabla f_i(y)\|\leq L_i \|x-y\|, \forall x,y\in \mathbf{R}^p$;

\item[(ii)] {\bf (quasi-strong convexity)}
$f_i$ is quasi-strongly convex, and there exists a positive constant $S_i$
such that $S_i \|x^*-y\|^2 \leq \langle \nabla f_i(x^*)-\nabla f_i(y), x^*-y \rangle$ for any $y\in \mathbf{R}^p$
and some optimal value $x^* \in {\cal X}^*$.
\end{enumerate}
\end{Assump}

Following Assumption {\ref{Assump:ObjFun}}, there hold for any ${\bf x}, {\bf y} \in \mathbf{R}^{n\times p}$ and some ${\bf x}^* \equiv {\bf 1}_n {(x^*)}^T$
\begin{align}
&\|\nabla {\bf f}({\bf x}) - \nabla {\bf f}({\bf y})\| \leq L_f \|{\bf x}-{\bf y}\|,\label{Eq:LipC-Grad}\\
&S_f \|{\bf x}^*-{\bf y}\|^2 \leq \langle \nabla {\bf f}({\bf x}^*)-\nabla {\bf f}({\bf y}), {\bf x}^*-{\bf y}\rangle,  \label{Eq:StrongCVX}
\end{align}
where the constants $L_f \triangleq \max_i L_i$ and $S_f \triangleq \min_i S_i$.

\begin{Assump}
\label{Assump:WeightingMatrices}
{\bf (positive definiteness)}
$D^{-1}\bar{A}+\bar{A}^TD^{-1} \succ 0$.
\end{Assump}

By noticing $D^{-1}\bar{A}+\bar{A}^TD^{-1} = D^{-1/2}(D^{-1/2}\bar{A}D^{1/2} + D^{1/2}\bar{A}^TD^{-1/2})D^{-1/2}$, we can guarantee the positive definiteness of $D^{-1}\bar{A}+\bar{A}^TD^{-1}$ by ensuring the matrix $\bar{A}+\bar{A}^T$ to be positive definite.
Note that $\bar{A}_{ii} > \sum_{j\neq i} \bar{A}_{ij}$ for each $i$, which means that $\bar{A}$ is strictly column-diagonal dominant.
To ensure the positive definiteness of $\bar{A}+\bar{A}^T$, each node $j$ can be ``selfish" and take a sufficiently large $A_{jj}$.

Before presenting the main result, we introduce the following notation. For each $t$, introducing ${\bf u}^t = \sum_{k=0}^t {\bf z}^k$,
then the Normalized ExtraPush iteration {\eqref{Eq:GEXTRA}} reduces to
\begin{equation}
\left\{
\begin{array}{l}
\bar{A}{\bf z}^{t+1} = \bar{A}{\bf z}^t -\alpha \nabla {\bf f}({\bf x}^t) - (\bar{A}-A){\bf u}^{t+1}\\
{\bf u}^{t+1} = {\bf u}^t + {\bf z}^{t+1}\\
{\bf x}^{t+1} = D^{-1} {\bf z}^{t+1}.
\end{array}%
\right.
\label{Eq:GEXTRA3}
\end{equation}
Let $({\bf z}^*, {\bf y}^*, {\bf x}^*) \in {\cal L}^*$, where ${\bf x}^*$ has been specified in {\eqref{Eq:StrongCVX}}.
Let ${\bf u}^*$ be any matrix that satisfies $(\bar{A}-A){\bf u}^* = {\bf y}^*$.
For simplicity, we introduce
\begin{equation}
\label{Eq:MetricForm}
{\bf v}^t = \left(
\begin{array}{c}
{\bf z}^t\\
{\bf u}^t
\end{array}
\right),~
{\bf v}^* = \left(
\begin{array}{c}
{\bf z}^*\\
{\bf u}^*
\end{array}
\right),~
G = \left(
\begin{array}{cc}
N^T & {\bf 0}\\
{\bf 0} & M
\end{array}
\right),
~S = \left(
\begin{array}{cc}
{\bf 0} & M\\
 -M^T & {\bf 0}
\end{array}
\right),
\end{equation}
where $N = D^{-1}\bar{A}$, $M =  D^{-1}(\bar{A}-A)$.
Let ${\bf f}_D({\bf z}) \triangleq  {\bf f}(D^{-1}{\bf z})$ and $\bar{{\bf f}}({\bf v}) \triangleq {\bf f}_D({\bf z})$.
Then $\nabla \bar{{\bf f}}({\bf v}) = [\nabla {\bf f}_D({\bf z}), 0]$.
By {\eqref{Eq:LipC-Grad}} and {\eqref{Eq:StrongCVX}}, there hold
\begin{align}
&\|\nabla \bar{{\bf f}}({\bf v}_1) - \nabla \bar{{\bf f}}({\bf v}_2) \| = \|\nabla {\bf f}_D({\bf z}_1) - \nabla {\bf f}_D({\bf z}_2) \| \leq \bar{L} \|{\bf z}_1 - {\bf z}_2\|,\label{Eq:LipC-Grad-fD} \\
&\bar{\mu} \|{\bf z}^*-{\bf z}\|^2 \leq \langle \nabla {\bf f}_D({\bf z}^*)-\nabla {\bf f}_D({\bf z}), {\bf z}^*-{\bf z}\rangle=\langle \nabla \bar{{\bf f}}({\bf v}^*)-\nabla \bar{{\bf f}}({\bf v}), {\bf v}^*-{\bf v}\rangle,\label{Eq:StrongCVX-FD}
\end{align}
where $\bar{L} \triangleq \frac{L_f}{\sigma^2_{\min}(D)}$ and $\bar{\mu} \triangleq \frac{S_f}{\sigma^2_{\max}(D)}$.
By {\eqref{Eq:GEXTRA3}} and {\eqref{Eq:MetricForm}}, the Normalized ExtraPush iteration {\eqref{Eq:GEXTRA}} implies
\begin{align}
\label{Eq:GEXTRA-MatrixForm}
G^T({\bf v}^{t+1} - {\bf v}^t) = -S{\bf v}^{t+1} -\alpha \nabla \bar{{\bf f}}({\bf v}^t).
\end{align}

Next, we will show that $G+G^T$ is positive semidefinite, which by Assumption {\ref{Assump:WeightingMatrices}} implies that $N+N^T$ is positive definite. It is sufficient to show that $M+M^T$ is positive semidefinite.
Note that
\begin{align*}
&\ M+M^T = \frac{D^{-1}({\bf I}_n-A)}{2} +\frac{({\bf I}_n-A)^TD^{-1}}{2} \\
 =&\ D^{-1/2}\bigg({\bf I}_n - \frac{D^{1/2}A^TD^{-1/2}+D^{-1/2}AD^{1/2}}{2}\bigg)D^{-1/2} \triangleq D^{-1/2}\Lambda D^{-1/2},
\end{align*}
and by Property {\ref{Prop:A}} (iii), $n\phi^T$ is the left eigenvector of $A^T$ corresponding to eigenvalue $1$,
and thus, $\Lambda$ is  the Laplacian of a certain directed graph ${\cal G}'$ with $A^T$ being its corresponding transition probability matrix {\cite{ChungLaplace2005}}.
It follows that $0=\lambda_1(\Lambda) \leq \lambda_2(\Lambda) \leq \cdots \leq  \lambda_{n}(\Lambda)$, where $\lambda_i(\Lambda)$ denotes the $i$th eigenvalue of $\Lambda$.
Therefore,
$M+M^T$ is positive semidefinite, and the following property holds
\[
\|x\|_{G}^2 = \frac{1}{2}\|x\|_{G+G^T}^2\geq 0, \quad\forall x\in \mathbf{R}^{n}.
\]
Let $$c_1 = \frac{\lambda_{\max}(MM^T)}{\tilde{\lambda}_{\min}(M^TM)},\;\;\;
c_2 = \frac{\lambda_{\max}(\frac{M+M^T}{2})}{\tilde{\lambda}_{\min}(M^TM)},\;\;\;
c_3 = \lambda_{\max}(NN^T) + 3c_1\lambda_{\max}(N^TN),$$
and let $$\Delta_1 = \bigg(\bar{\mu} -\frac{\eta}{2}\bigg)^2 - 6c_1{\bar L}^2,\;\;\;
\Delta_2 = \frac{{\bar L}^4}{4\eta^2} -3c_1{\bar L}^2\sigma \Big(c_3\sigma - \lambda_{\min}(N^T+N)\Big)$$ for some appropriate tunable parameters $\eta$ and $\sigma$.
Then we describe our main result as follows.

\begin{Thm}
\label{Thm:ConvThm}
Under Assumptions {\rm{\ref{Assump:Network}}}-{\rm{\ref{Assump:WeightingMatrices}}},
if the step size parameter $\alpha$ satisfies
\begin{align}
\label{Eq:Condonalpha*}
\frac{{\bar \mu} - \frac{\eta}{2} - \sqrt{\Delta_1}}{3c_1{\bar L}^2\sigma} < \alpha < \min\Bigg\{\frac{{\bar \mu} - \frac{\eta}{2} + \sqrt{\Delta_1}}{3c_1{\bar L}^2\sigma}, \frac{-\frac{{\bar L}^2}{2\eta}+ \sqrt{\Delta_2}}{3c_1{\bar L}^2 \sigma}\Bigg\}
\end{align}
for some appropriate $\eta$ and $\sigma$ as specified in {\eqref{Eq:Condoneta}} and {\eqref{Eq:Condonsigma}}, respectively, then the sequence $\{{\bf v}^t\}$ defined in {\eqref{Eq:MetricForm}} satisfies
\begin{equation}
\label{Eq:LinConvThm}
\|{\bf v}^{t}-{\bf v}^*\|_G^2 \geq (1+\delta)\|{\bf v}^{t+1}-{\bf v}^*\|_G^2,
\end{equation}
for  $\delta>0$ obeying
\begin{align*}
0<\delta \leq
\min\Bigg\{ \frac{-\frac{1}{\sigma} +(\bar{\mu} - \frac{\eta}{2})\alpha - \frac{3}{2}c_1{\bar L}^2\sigma \alpha^2}{\lambda_{\max}(\frac{N+N^T}{2})+ 3c_2\alpha^2{\bar L}^2},
\frac{\lambda_{\min}(\frac{N^T+N}{2}) - \frac{c_3\sigma}{2} - \frac{{\bar L}^2\alpha}{2\eta} - \frac{3}{2}c_1{\bar L}^2\sigma \alpha^2}{3c_2(\lambda_{\max}(N^TN)+\alpha^2{\bar L}^2)}\Bigg\}.
\end{align*}
\end{Thm}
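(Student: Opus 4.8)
The plan is to treat the matrix identity \eqref{Eq:GEXTRA-MatrixForm} as a perturbed fixed-point recursion and run a Lyapunov argument in the (semi-)norm $\|\cdot\|_G$, mirroring the analysis of Extra but now coping with the fact that $G$ is \emph{not} symmetric. First I would record the fixed-point relation satisfied by the limit. From the optimality conditions \eqref{Eq:1stOrderOpt} one has ${\bf y}^*=-\alpha\nabla{\bf f}({\bf x}^*)$, $(\bar A-A){\bf u}^*={\bf y}^*$, and $A{\bf z}^*={\bf z}^*$ so that $(\bar A-A){\bf z}^*=0$; moreover, since $A$ is column stochastic and ${\bf x}^*={\bf 1}_n(x^*)^T$ is consensual, $M^T{\bf z}^*=(\bar A-A)^T{\bf x}^*=0$. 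Substituting these into the block forms of $S$ and $\nabla\bar{\bf f}$ in \eqref{Eq:MetricForm} shows that ${\bf v}^*$ obeys $-S{\bf v}^*-\alpha\nabla\bar{\bf f}({\bf v}^*)=0$, i.e. \eqref{Eq:GEXTRA-MatrixForm} with ${\bf v}^{t+1}={\bf v}^t={\bf v}^*$. Subtracting this from \eqref{Eq:GEXTRA-MatrixForm} yields the error recursion
\[
G^T({\bf v}^{t+1}-{\bf v}^t) = -S({\bf v}^{t+1}-{\bf v}^*) - \alpha\big(\nabla\bar{\bf f}({\bf v}^t)-\nabla\bar{\bf f}({\bf v}^*)\big).
\]

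Next I would pair this identity with $2({\bf v}^{t+1}-{\bf v}^*)$. Because $S$ is skew-symmetric, $\langle S({\bf v}^{t+1}-{\bf v}^*),{\bf v}^{t+1}-{\bf v}^*\rangle=0$, so the $S$-term drops. Expanding the left side $2\langle{\bf v}^{t+1}-{\bf v}^t,\,G({\bf v}^{t+1}-{\bf v}^*)\rangle$ through the polarization identity for $\|\cdot\|_G^2=\langle\cdot,\tfrac{G+G^T}{2}\cdot\rangle$ produces the telescoping combination $\|{\bf v}^{t+1}-{\bf v}^t\|_G^2+\|{\bf v}^{t+1}-{\bf v}^*\|_G^2-\|{\bf v}^t-{\bf v}^*\|_G^2$ \emph{plus} a residual cross term coming precisely from the asymmetry of $N$ and $M$. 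Rearranging gives the descent equality
\[
\|{\bf v}^t-{\bf v}^*\|_G^2 - \|{\bf v}^{t+1}-{\bf v}^*\|_G^2 = \|{\bf v}^{t+1}-{\bf v}^t\|_G^2 + 2\alpha\langle\nabla\bar{\bf f}({\bf v}^t)-\nabla\bar{\bf f}({\bf v}^*),\,{\bf v}^{t+1}-{\bf v}^*\rangle - 2\Big\langle{\bf v}^t-{\bf v}^*,\,\tfrac{G-G^T}{2}({\bf v}^{t+1}-{\bf v}^*)\Big\rangle .
\]
I would then split $\langle\cdot,{\bf v}^{t+1}-{\bf v}^*\rangle=\langle\cdot,{\bf v}^t-{\bf v}^*\rangle+\langle\cdot,{\bf v}^{t+1}-{\bf v}^t\rangle$: the first piece is bounded below by quasi-strong convexity \eqref{Eq:StrongCVX-FD}, supplying a surplus $+2\alpha\bar\mu\|{\bf z}^t-{\bf z}^*\|^2$, while the second piece and the asymmetry cross term are controlled by Young's inequality with free parameters $\eta$ and $\sigma$ together with the Lipschitz bound \eqref{Eq:LipC-Grad-fD}.

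To convert the descent equality into the contraction \eqref{Eq:LinConvThm} I must bound $\|{\bf v}^{t+1}-{\bf v}^*\|_G^2$ from above. Splitting along the block structure, $\|{\bf v}^{t+1}-{\bf v}^*\|_G^2=\langle{\bf z}^{t+1}-{\bf z}^*,\tfrac{N+N^T}{2}(\cdots)\rangle+\langle{\bf u}^{t+1}-{\bf u}^*,\tfrac{M+M^T}{2}(\cdots)\rangle$; the $N$-part is immediate from $\lambda_{\max}(\tfrac{N+N^T}{2})$, and for the $M$-part I would use the top block of \eqref{Eq:GEXTRA3},
\[
M({\bf u}^{t+1}-{\bf u}^*) = -N({\bf z}^{t+1}-{\bf z}^t) - \alpha\big(\nabla{\bf f}_D({\bf z}^t)-\nabla{\bf f}_D({\bf z}^*)\big),
\]
to write $\|{\bf u}^{t+1}-{\bf u}^*\|^2\le\tfrac{1}{\tilde\lambda_{\min}(M^TM)}\|M({\bf u}^{t+1}-{\bf u}^*)\|^2$ and expand into $\|{\bf z}^{t+1}-{\bf z}^t\|^2$ and $\|{\bf z}^t-{\bf z}^*\|^2$ terms (the $\tfrac{M+M^T}{2}\succeq0$ fact proved just before the theorem guarantees this part is a genuine semi-norm). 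This is exactly where $c_1,c_2,c_3$ enter: $c_1,c_2$ package the pseudo-inverse factors $\tfrac{\lambda_{\max}(MM^T)}{\tilde\lambda_{\min}(M^TM)}$ and $\tfrac{\lambda_{\max}((M+M^T)/2)}{\tilde\lambda_{\min}(M^TM)}$, while $c_3=\lambda_{\max}(NN^T)+3c_1\lambda_{\max}(N^TN)$ collects the coefficient of $\|{\bf z}^{t+1}-{\bf z}^t\|^2$.

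Finally I would assemble all estimates so that $\|{\bf v}^t-{\bf v}^*\|_G^2-\|{\bf v}^{t+1}-{\bf v}^*\|_G^2\ge\delta\|{\bf v}^{t+1}-{\bf v}^*\|_G^2$ reduces to a quadratic form in $\|{\bf z}^t-{\bf z}^*\|$ and $\|{\bf z}^{t+1}-{\bf z}^t\|$ being nonnegative. Positivity of the coefficient tied to the strong-convexity surplus amounts to $-\tfrac1\sigma+(\bar\mu-\tfrac\eta2)\alpha-\tfrac32 c_1\bar L^2\sigma\alpha^2>0$, a quadratic in $\alpha$ with discriminant $\Delta_1$ and giving the first bound on $\delta$; positivity of the coefficient tied to $\|{\bf v}^{t+1}-{\bf v}^t\|_G^2$ amounts to $\lambda_{\min}(\tfrac{N^T+N}{2})-\tfrac{c_3\sigma}{2}-\tfrac{\bar L^2\alpha}{2\eta}-\tfrac32 c_1\bar L^2\sigma\alpha^2>0$, with discriminant $\Delta_2$ and giving the second bound on $\delta$ (this is where Assumption \ref{Assump:WeightingMatrices}, ensuring $\lambda_{\min}(\tfrac{N^T+N}{2})>0$, is indispensable). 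Requiring both numerators positive yields the two-sided admissible interval \eqref{Eq:Condonalpha*} for $\alpha$ and the stated value of $\delta$. The main obstacle is the asymmetry cross term $2\langle{\bf v}^t-{\bf v}^*,\tfrac{G-G^T}{2}({\bf v}^{t+1}-{\bf v}^*)\rangle$, which has no counterpart in undirected Extra: it must be absorbed simultaneously by the strongly-convex surplus and by $\|{\bf v}^{t+1}-{\bf v}^t\|_G^2$, and it is exactly this competition—tuned through $\eta$ and $\sigma$—that both narrows $\alpha$ to a finite interval and forces the discriminant conditions $\Delta_1,\Delta_2>0$.
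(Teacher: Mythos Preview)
Your skeleton is the paper's: the Lyapunov argument in $\|\cdot\|_G$, the error recursion $G^T({\bf v}^{t+1}-{\bf v}^t)=-S({\bf v}^{t+1}-{\bf v}^*)-\alpha(\nabla\bar{\bf f}({\bf v}^t)-\nabla\bar{\bf f}({\bf v}^*))$, skew-symmetry of $S$, Young's inequalities with free $\eta,\sigma$, and Lemma~\ref{Lemm:IterativeRelation} to bound $\|M({\bf u}^{t+1}-{\bf u}^*)\|^2$. The roles you assign to $c_1,c_2,c_3$ and to $\Delta_1,\Delta_2$ match the paper.

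Two of your splits differ from the paper's, and with them you will not land on the \emph{stated} $\delta$ formula. First, you split the \emph{vector}, writing $\langle\nabla\bar{\bf f}({\bf v}^t)-\nabla\bar{\bf f}({\bf v}^*),{\bf v}^{t+1}-{\bf v}^*\rangle=\langle\cdot,{\bf v}^t-{\bf v}^*\rangle+\langle\cdot,{\bf v}^{t+1}-{\bf v}^t\rangle$ and applying quasi-strong convexity at ${\bf z}^t$. The paper instead splits the \emph{gradient}, $\nabla\bar{\bf f}({\bf v}^t)-\nabla\bar{\bf f}({\bf v}^*)=[\nabla\bar{\bf f}({\bf v}^{t+1})-\nabla\bar{\bf f}({\bf v}^*)]+[\nabla\bar{\bf f}({\bf v}^t)-\nabla\bar{\bf f}({\bf v}^{t+1})]$, so strong convexity lands at ${\bf z}^{t+1}$; the same insertion of $\pm\nabla{\bf f}_D({\bf z}^{t+1})$ is done again when bounding $\|M({\bf u}^{t+1}-{\bf u}^*)\|^2$ (see \eqref{Eq:ut-ustar1}). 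This is what produces the clean pairing $\|{\bf z}^{t+1}-{\bf z}^*\|_P^2$ against the $N$-block of $\delta\|{\bf v}^{t+1}-{\bf v}^*\|_G^2$ and yields exactly the numerators $-\tfrac1\sigma+(\bar\mu-\tfrac\eta2)\alpha-\tfrac32c_1\bar L^2\sigma\alpha^2$ and $\lambda_{\min}(\tfrac{N^T+N}{2})-\tfrac{c_3\sigma}{2}-\tfrac{\bar L^2\alpha}{2\eta}-\tfrac32c_1\bar L^2\sigma\alpha^2$. Second, the paper never isolates the skew part $\tfrac{G-G^T}{2}$; it treats $\langle{\bf v}^*-{\bf v}^{t+1},G({\bf v}^t-{\bf v}^{t+1})\rangle$ block-by-block and uses $M{\bf z}^*=0$ to rewrite $\langle M^T({\bf u}^*-{\bf u}^{t+1}),{\bf u}^t-{\bf u}^{t+1}\rangle=\langle M^T({\bf u}^*-{\bf u}^{t+1}),{\bf z}^*-{\bf z}^{t+1}\rangle$, which after Young gives $\tfrac1\sigma\|{\bf z}^*-{\bf z}^{t+1}\|^2+\tfrac\sigma2\|{\bf u}^*-{\bf u}^{t+1}\|_{MM^T}^2+\tfrac\sigma2\|{\bf z}^t-{\bf z}^{t+1}\|_{NN^T}^2$. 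Your route via $\tfrac{G-G^T}{2}$ would, after Young, produce quadratic forms in $(N-N^T)$ and a coupling between ${\bf u}^t-{\bf u}^*$ and ${\bf u}^{t+1}-{\bf u}^*$ rather than the $MM^T$ and $NN^T$ terms, hence different constants. Your argument would still give linear convergence, but to recover the exact bounds in the statement, swap in the paper's two splits.
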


From this theorem, the sequence $\{{\bf v}^t\}$ converges to ${\bf v}^*$ at a linear rate in the sense of ``$G$-norm''.
By the definition of ${\bf v}^*$ in {\eqref{Eq:MetricForm}}, ${\bf v}^*$ is indeed defined by some optimal value $({\bf z}^*, {\bf y}^*, {\bf x}^*)$.
Roughly speaking, bigger $\delta$ means faster convergence rate. As specified in Theorem {\ref{Thm:ConvThm}}, $\delta$ is affected by many factors.
Generally, $\delta$ decreases with respect to both $\lambda_{\max}(\frac{N+N^T}{2})$ and $\lambda_{\max}(N^TN)$,
which potentially implies that if all nodes are more ``selfish", that is, they hold more information for themselves than sending  to their out-neighbors.
Consequently, the information mixing speed of the network will get smaller, and thus the convergence of  Normalized ExtraPush becomes slower.
Therefore, we suggest a more democratic rule (such as the matrix $A$ specified in {\eqref{Eq:MixingMatrixA*}})  for faster convergence in practice.
To ensure $\delta>0$, it requires that the step size $\alpha$  lie in an appropriate interval.
It should be pointed out that the condition \eqref{Eq:Condonalpha*} on $\alpha$ is sufficiently, not necessary, for the linear convergence of Normalized ExtraPush. Normalized ExtraPush algorithm may not diverge if a small $\alpha$ is set. In fact, in the next section, it can be observed that both ExtraPush and Normalized ExtraPush algorithms  converge under small values of $\alpha$. In general, a smaller $\alpha$ implies a slower rate of convergence.

To prove Theorem {\ref{Thm:ConvThm}}, we  need the following lemmas.
\begin{Lemm}
For any  $({\bf z}^*, {\bf y}^*, {\bf x}^*) \in {\cal L}^*$, let ${\bf u}^*$ satisfy $(\bar{A}-A){\bf u}^* = {\bf y}^*$. Then there hold
\begin{align}
& M{\bf z}^* = {\bf 0}, \label{Eq:z*1}\\
& M^T {\bf z}^* = {\bf 0}, \label{Eq:z*2} \\
& S{\bf v}^* + \alpha \nabla \bar{\bf f}({\bf v}^*)=0. \label{Eq:optcond-v*}
\end{align}
\end{Lemm}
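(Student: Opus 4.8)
The plan is to verify the three identities directly from the characterization of ${\cal L}^*$ in Theorem \ref{Thm:1stOrderOptCond} together with the algebraic structure of $M$ and $S$. The observation that makes everything transparent is that $\bar{A}-A = \frac{{\bf I}_n+A}{2}-A = \frac{{\bf I}_n-A}{2}$, so $M = D^{-1}(\bar{A}-A) = \tfrac12 D^{-1}({\bf I}_n-A)$ and $M^T = \tfrac12({\bf I}_n-A^T)D^{-1}$. For \eqref{Eq:z*1} I would simply use that membership in ${\cal L}^*$ forces ${\bf z}^*\in\nullspc({\bf I}_n-A)$, i.e.\ $A{\bf z}^*={\bf z}^*$; then $M{\bf z}^* = \tfrac12 D^{-1}({\bf z}^*-A{\bf z}^*) = {\bf 0}$. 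This step is immediate.

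The main (and essentially only nontrivial) obstacle is \eqref{Eq:z*2}, because $M^T$ brings in $A^T$ rather than $A$, and ${\bf z}^*$ is annihilated by ${\bf I}_n-A$, not by ${\bf I}_n-A^T$. To resolve this I would exploit the \emph{consensual} structure of $D^{-1}{\bf z}^*$. By Property \ref{Prop:A}(ii), $\nullspc({\bf I}_n-A)=\nullspc({\bf I}_n-\phi{\bf 1}_n^T)$, so ${\bf z}^* = \phi{\bf 1}_n^T{\bf z}^*$; since $D=n\,\diagnal(\phi)$ gives $D^{-1}\phi = \tfrac1n{\bf 1}_n$ (using $\phi>0$ under Assumption \ref{Assump:Network}), this yields $D^{-1}{\bf z}^* = \tfrac1n{\bf 1}_n{\bf 1}_n^T{\bf z}^* = {\bf 1}_n c^T$ with $c^T = \tfrac1n{\bf 1}_n^T{\bf z}^*$, i.e.\ every row is the common vector $c^T$. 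Then $M^T{\bf z}^* = \tfrac12({\bf I}_n-A^T)D^{-1}{\bf z}^* = \tfrac12\big(({\bf I}_n-A^T){\bf 1}_n\big)c^T$, and column-stochasticity of $A$, equivalently $A^T{\bf 1}_n={\bf 1}_n$, makes $({\bf I}_n-A^T){\bf 1}_n={\bf 0}$, establishing \eqref{Eq:z*2}. The crux is recognizing that the consensual structure of $D^{-1}{\bf z}^*$ converts the action of $M^T$ into the action of ${\bf I}_n-A^T$ on the all-ones vector, where column-stochasticity finally bites.

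Finally, for \eqref{Eq:optcond-v*} I would expand the block product in \eqref{Eq:MetricForm}: $S{\bf v}^*$ has upper block $M{\bf u}^*$ and lower block $-M^T{\bf z}^*$. The lower block vanishes by \eqref{Eq:z*2}, and for the upper block the defining relation $(\bar{A}-A){\bf u}^*={\bf y}^*$ gives $M{\bf u}^* = D^{-1}(\bar{A}-A){\bf u}^* = D^{-1}{\bf y}^*$. Since $\nabla\bar{\bf f}({\bf v}^*) = [\nabla{\bf f}_D({\bf z}^*),\,0]$ and $\nabla{\bf f}_D({\bf z}^*) = D^{-1}\nabla{\bf f}(D^{-1}{\bf z}^*) = D^{-1}\nabla{\bf f}({\bf x}^*)$ (the diagonal chain rule, with ${\bf x}^*=D^{-1}{\bf z}^*$), the upper block of $S{\bf v}^*+\alpha\nabla\bar{\bf f}({\bf v}^*)$ equals $D^{-1}\big({\bf y}^*+\alpha\nabla{\bf f}({\bf x}^*)\big)$, which is zero by the first optimality condition in \eqref{Eq:1stOrderOpt}; the lower block is already zero. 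This closes the proof, and apart from \eqref{Eq:z*2} the remaining steps are routine substitutions.
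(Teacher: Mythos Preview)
Your proof is correct and follows essentially the same route as the paper: \eqref{Eq:z*1} from ${\bf z}^*\in\nullspc({\bf I}_n-A)$, \eqref{Eq:z*2} from the consensual structure of ${\bf x}^*=D^{-1}{\bf z}^*$ combined with column-stochasticity (so that $(\bar{A}-A)^T{\bf x}^*={\bf 0}$), and \eqref{Eq:optcond-v*} by unpacking the block structure of $S{\bf v}^*$ and invoking ${\bf y}^*+\alpha\nabla{\bf f}({\bf x}^*)=0$. The only difference is that you spell out the algebra $\bar{A}-A=\tfrac12({\bf I}_n-A)$ and rederive the consensual form of $D^{-1}{\bf z}^*$ explicitly via Property~\ref{Prop:A}(ii), whereas the paper just cites that ${\bf x}^*$ is consensual from Theorem~\ref{Thm:1stOrderOptCond}.
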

\begin{proof}
By the optimality of $({\bf z}^*, {\bf y}^*, {\bf x}^*)$, the followings hold: (i) $(\bar{A}-A){\bf z}^* =0$, and thus $ M{\bf z}^* =0$; (ii) $D^{-1}{\bf z}^* = {\bf x}^*$ is consensual; from the column stochasticity of both $A$ and $\bar{A}$,  it follows $M^T {\bf z}^* = (\bar{A}-A)^T {\bf x}^* = 0$; (iii) $M{\bf u}^* + \alpha \nabla {\bf f}_D({\bf z}^*) = D^{-1}{\bf y}^* + \alpha D^{-1}\nabla {\bf f}({\bf x}^*)=0,$  with $M^T {\bf z}^*=0$, which imply $S{\bf v}^* + \alpha \nabla \bar{\bf f}({\bf v}^*) =0.$\hfill$\Box$
\end{proof}

\begin{Lemm}
\label{Lemm:IterativeRelation}
For any $t\in \mathbf{N}$, it holds
\begin{equation}
\label{Eq:IterativeRelation}
N({\bf z}^{t+1} - {\bf z}^t) = - M({\bf u}^{t+1} - {\bf u}^*) - \alpha [\nabla {\bf f}_D({\bf z}^t) - \nabla {\bf f}_D({\bf z}^*)].
\end{equation}
\end{Lemm}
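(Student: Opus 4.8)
The plan is to obtain the identity directly from the compact iteration \eqref{Eq:GEXTRA3} by a single left multiplication by $D^{-1}$, and then to absorb an optimality identity into the right-hand side. First I would take the first line of \eqref{Eq:GEXTRA3}, namely $\bar{A}{\bf z}^{t+1} = \bar{A}{\bf z}^t - \alpha \nabla {\bf f}({\bf x}^t) - (\bar{A}-A){\bf u}^{t+1}$, and multiply both sides on the left by $D^{-1}$. Using the definitions $N = D^{-1}\bar{A}$ and $M = D^{-1}(\bar{A}-A)$, together with the scaling identity $\nabla {\bf f}_D({\bf z}^t) = D^{-1}\nabla {\bf f}(D^{-1}{\bf z}^t) = D^{-1}\nabla {\bf f}({\bf x}^t)$ (which follows from the chain rule because ${\bf f}_D({\bf z}) = {\bf f}(D^{-1}{\bf z})$, $D$ is diagonal, and ${\bf x}^t = D^{-1}{\bf z}^t$), this produces $N{\bf z}^{t+1} = N{\bf z}^t - \alpha \nabla {\bf f}_D({\bf z}^t) - M{\bf u}^{t+1}$, equivalently $N({\bf z}^{t+1}-{\bf z}^t) = -M{\bf u}^{t+1} - \alpha \nabla {\bf f}_D({\bf z}^t)$.

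Second, I would invoke the optimality identity $M{\bf u}^* + \alpha \nabla {\bf f}_D({\bf z}^*) = 0$ established in the proof of the preceding lemma (it arises from $D^{-1}\big({\bf y}^* + \alpha \nabla {\bf f}({\bf x}^*)\big) = 0$ together with $(\bar{A}-A){\bf u}^* = {\bf y}^*$). Since this quantity vanishes, I may add it to the right-hand side of the displayed relation without changing it; regrouping the two matrix terms and the two gradient terms then yields $N({\bf z}^{t+1}-{\bf z}^t) = -M({\bf u}^{t+1}-{\bf u}^*) - \alpha[\nabla {\bf f}_D({\bf z}^t) - \nabla {\bf f}_D({\bf z}^*)]$, which is exactly \eqref{Eq:IterativeRelation}.

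I anticipate no genuine obstacle: this is a bookkeeping identity whose only substance is tracking which operators get premultiplied by $D^{-1}$ and recognizing that the constant term $M{\bf u}^* + \alpha \nabla {\bf f}_D({\bf z}^*)$ is zero at optimality. The single point that warrants care is the gradient scaling, since ${\bf f}_D({\bf z}) = {\bf f}(D^{-1}{\bf z})$ (rather than $D{\bf f}(D^{-1}{\bf z})$) means the chain rule contributes the factor $D^{-1}$, so that $\alpha \nabla {\bf f}({\bf x}^t)$ in \eqref{Eq:GEXTRA3} becomes precisely $\alpha \nabla {\bf f}_D({\bf z}^t)$ after the left multiplication, matching the form required in \eqref{Eq:IterativeRelation}. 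Everything else is linear and immediate.
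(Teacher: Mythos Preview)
Your proposal is correct and matches the paper's own justification, which simply states that the lemma follows from \eqref{Eq:GEXTRA3} together with the optimality identity $M{\bf u}^* + \alpha \nabla {\bf f}_D({\bf z}^*) = 0$ from the preceding lemma. Your explicit left-multiplication by $D^{-1}$ and the chain-rule check for $\nabla {\bf f}_D$ just make those two steps visible.
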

This lemma follows from {\eqref{Eq:GEXTRA3}} and the fact $M{\bf u}^* + \alpha \nabla {\bf f}_D({\bf z}^*) = 0$  in the last lemma.

\begin{Lemm}
\label{Lemm:DiffofTwoIter}
Let $\{{\bf v}^t\}$ be a sequence generated by the iteration {\eqref{Eq:GEXTRA-MatrixForm}} and ${\bf v}^*$ be defined in {\eqref{Eq:MetricForm}}. Then, it holds
\begin{align}
\label{Eq:DiffofTwoIter}
\|{\bf v}^{t+1} - {\bf v}^*\|_G^2 - \|{\bf v}^t - {\bf v}^*\|^2_G
& \leq -\|{\bf v}^{t+1} - {\bf v}^t\|^2_G + \|{\bf z}^{t+1} - {\bf z}^t\|^2_{\frac{\sigma}{2}NN^T + \frac{\alpha \bar{L}^2}{2\eta}{\bf I}_n} \nonumber\\
&\qquad\;\;   - \|{\bf z}^{*} - {\bf z}^{t+1}\|^2_{(\alpha \bar{\mu}-\frac{\alpha \eta}{2}-\frac{1}{\sigma}){\bf I}_n} + \frac{\sigma}{2}\|{\bf u}^{*} - {\bf u}^{t+1}\|^2_{MM^T},
\end{align}
where $\sigma, \eta>0$ are two tunable parameters.
\end{Lemm}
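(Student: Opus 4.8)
The plan is to adapt the standard EXTRA-style energy argument to the non-symmetric metric $G$. First I would subtract the optimality identity \eqref{Eq:optcond-v*} from the matrix-form iteration \eqref{Eq:GEXTRA-MatrixForm} to obtain
\[
G^T({\bf v}^{t+1}-{\bf v}^t) = -S({\bf v}^{t+1}-{\bf v}^*) - \alpha\big(\nabla\bar{\bf f}({\bf v}^t) - \nabla\bar{\bf f}({\bf v}^*)\big).
\]
Because $G$ is not symmetric, I would work with $\|x\|_G^2 = \langle x, \tfrac{G+G^T}{2}x\rangle$ and the associated polarization rule, which gives
\[
\|{\bf v}^{t+1}-{\bf v}^*\|_G^2 - \|{\bf v}^t-{\bf v}^*\|_G^2 + \|{\bf v}^{t+1}-{\bf v}^t\|_G^2 = T_1 + T_2,
\]
where $T_1 \triangleq \langle G^T({\bf v}^{t+1}-{\bf v}^t), {\bf v}^{t+1}-{\bf v}^*\rangle$ and $T_2 \triangleq \langle G({\bf v}^{t+1}-{\bf v}^t), {\bf v}^{t+1}-{\bf v}^*\rangle$. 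Moving $\|{\bf v}^{t+1}-{\bf v}^t\|_G^2$ to the right will produce the leading $-\|{\bf v}^{t+1}-{\bf v}^t\|_G^2$ of the claim, so the whole task reduces to upper-bounding $T_1$ and $T_2$.

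For $T_1$ I would substitute the displayed iteration. Since $S$ is skew-symmetric ($S^T=-S$), the term $\langle S({\bf v}^{t+1}-{\bf v}^*), {\bf v}^{t+1}-{\bf v}^*\rangle$ vanishes, leaving $T_1 = -\alpha\langle\nabla\bar{\bf f}({\bf v}^t)-\nabla\bar{\bf f}({\bf v}^*), {\bf v}^{t+1}-{\bf v}^*\rangle$. To reconcile the gradient's time index with the factor ${\bf v}^{t+1}-{\bf v}^*$, I would split $\nabla\bar{\bf f}({\bf v}^t)-\nabla\bar{\bf f}({\bf v}^*) = [\nabla\bar{\bf f}({\bf v}^{t+1})-\nabla\bar{\bf f}({\bf v}^*)] - [\nabla\bar{\bf f}({\bf v}^{t+1})-\nabla\bar{\bf f}({\bf v}^t)]$. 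The first bracket, paired with ${\bf v}^{t+1}-{\bf v}^*$, is bounded below by the quasi-strong convexity estimate \eqref{Eq:StrongCVX-FD}, giving $-\alpha\bar{\mu}\|{\bf z}^*-{\bf z}^{t+1}\|^2$; the second bracket (which has only a ${\bf z}$-component) is controlled by Young's inequality with parameter $\eta$ together with the Lipschitz bound \eqref{Eq:LipC-Grad-fD}, producing $\tfrac{\alpha\eta}{2}\|{\bf z}^*-{\bf z}^{t+1}\|^2 + \tfrac{\alpha\bar{L}^2}{2\eta}\|{\bf z}^{t+1}-{\bf z}^t\|^2$. This accounts for the $-\|{\bf z}^*-{\bf z}^{t+1}\|^2$ weighted by $(\alpha\bar{\mu}-\tfrac{\alpha\eta}{2})$ and the $\tfrac{\alpha\bar{L}^2}{2\eta}{\bf I}_n$ part of the target.

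The term $T_2$ I would expand through the block structure of $G$. Using ${\bf u}^{t+1}-{\bf u}^t={\bf z}^{t+1}$ from \eqref{Eq:GEXTRA3} and $M{\bf z}^*={\bf 0}$ from \eqref{Eq:z*1}, one gets $T_2 = \langle N^T({\bf z}^{t+1}-{\bf z}^t), {\bf z}^{t+1}-{\bf z}^*\rangle + \langle M({\bf z}^{t+1}-{\bf z}^*), {\bf u}^{t+1}-{\bf u}^*\rangle$. Two applications of Young's inequality with the common parameter $\sigma$, invoking $\|N^Tx\|^2=\|x\|^2_{NN^T}$ and $\|M^Tx\|^2=\|x\|^2_{MM^T}$, bound these by $\tfrac{\sigma}{2}\|{\bf z}^{t+1}-{\bf z}^t\|^2_{NN^T} + \tfrac{\sigma}{2}\|{\bf u}^*-{\bf u}^{t+1}\|^2_{MM^T} + \tfrac{1}{\sigma}\|{\bf z}^*-{\bf z}^{t+1}\|^2$, where the two $\tfrac{1}{2\sigma}$ residuals combine into the $+\tfrac{1}{\sigma}$ that merges with the coefficient from $T_1$ to give exactly $-(\alpha\bar{\mu}-\tfrac{\alpha\eta}{2}-\tfrac{1}{\sigma})$. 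Collecting $T_1+T_2$ and moving $\|{\bf v}^{t+1}-{\bf v}^t\|_G^2$ across yields the stated inequality. I expect the main obstacle to be the bookkeeping forced by the non-symmetry of $G$: only $T_1$ is delivered directly by the iteration, whereas $T_2$ must be re-expressed in block form and split by two independent Young inequalities, and one must keep the roles of $M$ versus $M^T$ (and $N$ versus $N^T$) straight so that the $NN^T$ and $MM^T$ weights land on the intended differences.
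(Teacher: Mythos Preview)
Your proposal is correct and follows essentially the same route as the paper: the same polarization identity (the paper's \eqref{Eq:SquareEqForm} is your $T_1+T_2$ decomposition with a sign flip), the same use of skew-symmetry of $S$ and the optimality relation \eqref{Eq:optcond-v*} to reduce $T_1$ to the gradient term, the same split of the gradient difference handled by \eqref{Eq:StrongCVX-FD} and \eqref{Eq:LipC-Grad-fD} with Young parameter $\eta$, and the same block expansion of $T_2$ using ${\bf u}^{t+1}-{\bf u}^t={\bf z}^{t+1}$ and $M{\bf z}^*={\bf 0}$ followed by two Young inequalities with parameter $\sigma$ whose $\tfrac{1}{2\sigma}$ residuals combine into $\tfrac{1}{\sigma}$. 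The only cosmetic difference is that the paper writes the two cross terms as $\langle {\bf v}^*-{\bf v}^{t+1}, G({\bf v}^t-{\bf v}^{t+1})\rangle$ and $\langle {\bf v}^*-{\bf v}^{t+1}, G^T({\bf v}^t-{\bf v}^{t+1})\rangle$, which are exactly your $T_2$ and $T_1$ after a global sign change.
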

\begin{proof}
Note that
\begin{align}
\label{Eq:SquareEqForm}
 \|{\bf v}^{t+1} - {\bf v}^*\|_G^2 - \|{\bf v}^t - {\bf v}^*\|^2_G
&= -\|{\bf v}^{t+1} - {\bf v}^t\|^2_G + \langle {\bf v}^*- {\bf v}^{t+1}, G({\bf v}^{t}-{\bf v}^{t+1}) \rangle\nonumber\\
& \qquad\;\;    + \langle {\bf v}^*- {\bf v}^{t+1}, G^T({\bf v}^{t}-{\bf v}^{t+1}) \rangle.
\end{align}
In the following, we analyze the two inner-product terms:
\begin{align}
\label{Eq:1stTerm}
\langle {\bf v}^*- {\bf v}^{t+1}, G({\bf v}^{t}-{\bf v}^{t+1}) \rangle
& = \langle {\bf z}^*- {\bf z}^{t+1}, N^T({\bf z}^{t}-{\bf z}^{t+1}) \rangle + \langle M^T({\bf u}^*- {\bf u}^{t+1}), {\bf u}^{t}-{\bf u}^{t+1} \rangle \nonumber\\
(\because {\eqref{Eq:z*1}}, M{\bf z}^* = {\bf 0})
& = \langle {\bf z}^*- {\bf z}^{t+1}, N^T({\bf z}^{t}-{\bf z}^{t+1}) \rangle + \langle M^T({\bf u}^*- {\bf u}^{t+1}), {\bf z}^{*}-{\bf z}^{t+1} \rangle \nonumber\\
& \leq \frac{\sigma}{2}\|{\bf z}^{t}-{\bf z}^{t+1}\|_{NN^T}^2 + \frac{1}{\sigma}\|{\bf z}^{*}-{\bf z}^{t+1}\|^2 + \frac{\sigma}{2}\|{\bf u}^{*}-{\bf u}^{t+1}\|_{MM^T}^2,
\end{align}
and
\begin{align}
\label{Eq:2ndTerm}
\langle {\bf v}^*- {\bf v}^{t+1}, G^T({\bf v}^{t}-{\bf v}^{t+1}) \rangle
& = \langle {\bf v}^*- {\bf v}^{t+1}, S{\bf v}^{t+1}+\alpha \nabla \bar{\bf f}({\bf v}^{t}) \rangle  \ (\because {\eqref{Eq:MetricForm}})\nonumber\\
& = \langle {\bf v}^*- {\bf v}^{t+1}, S({\bf v}^{t+1}-{\bf v}^{*})+\alpha (\nabla \bar{\bf f}({\bf v}^{t})-\nabla \bar{\bf f}({\bf v}^{*})) \rangle \ (\because {\eqref{Eq:optcond-v*}}) \nonumber\\
(\because S = -S^T) \ & = \alpha \langle {\bf v}^*- {\bf v}^{t+1}, \nabla \bar{\bf f}({\bf v}^{t})-\nabla \bar{\bf f}({\bf v}^{*})\rangle \nonumber\\
& = \alpha \langle {\bf v}^*- {\bf v}^{t+1}, \nabla \bar{\bf f}({\bf v}^{t+1})-\nabla \bar{\bf f}({\bf v}^{*})\rangle\nonumber\\
& \quad + \alpha \langle {\bf v}^*- {\bf v}^{t+1}, \nabla \bar{\bf f}({\bf v}^{t})-\nabla \bar{\bf f}({\bf v}^{t+1})\rangle \nonumber\\
&\leq -\alpha \bar{\mu} \|{\bf z}^{t+1} - {\bf z}^*\|^2 + \frac{\alpha \eta}{2} \|{\bf z}^* - {\bf z}^{t+1}\|^2 + \frac{\alpha \bar{L}^2}{2\eta} \|{\bf z}^t - {\bf z}^{t+1}\|^2.
\end{align}
Substituting {\eqref{Eq:1stTerm}} and {\eqref{Eq:2ndTerm}} into {\eqref{Eq:SquareEqForm}}, then we can conclude the lemma.\hfill$\Box$
\end{proof}

\begin{proof} {(for Theorem {\ref{Thm:ConvThm}})}
In order to establish {\eqref{Eq:LinConvThm}} for some constant $\delta>0$, in light of Lemma {\ref{Lemm:DiffofTwoIter}}, it is sufficient to show that the right-hand side of {\eqref{Eq:DiffofTwoIter}} is no more than $-\delta \|{\bf v}^{t+1} - {\bf v}^{*}\|_G^2$,
which implies
\begin{align}
\label{Eq:AimIneq-1}
\|{\bf z}^{t+1} - {\bf z}^{*}\|_P^2 + \|{\bf z}^{t+1} - {\bf z}^{t}\|_Q^2 \geq \|{\bf u}^{t+1} - {\bf u}^{*}\|_R^2,
\end{align}
where
\begin{align*}
&P = \bigg(\alpha \bar{\mu} - \frac{\alpha \eta}{2} -\frac{1}{\sigma}\bigg){\bf I}_n - \delta \frac{N+N^T}{2},\;\;\;
Q = \frac{N^T+N}{2} - \frac{\sigma}{2}NN^T - \frac{\alpha \bar{L}^2}{2\eta}{\bf I}_n,\\
&R = \frac{\sigma}{2}MM^T + \delta {\bigg(\frac{M+M^T}{2}\bigg)}.
\end{align*}

\textit{Establishing {\eqref{Eq:AimIneq-1}}: Step 1.} From Lemma {\ref{Lemm:IterativeRelation}}, there holds
\begin{align}
\label{Eq:ut-ustar1}
\|{\bf u}^{*} - {\bf u}^{t+1}\|_{M^TM}^2
& = \|M({\bf u}^{*} - {\bf u}^{t+1})\|^2 \nonumber\\
& = \|N({\bf z}^{t+1} - {\bf z}^{t}) + \alpha [\nabla {\bf f}_D({\bf z}^{t+1}) - \nabla {\bf f}_D({\bf z}^*)] + \alpha [\nabla {\bf f}_D({\bf z}^{t}) - \nabla {\bf f}_D({\bf z}^{t+1})]\|^2 \nonumber\\
& \leq 3\|{\bf z}^{t+1} - {\bf z}^{t}\|_{N^TN}^2 + 3\alpha^2 \bar{L}^2 \|{\bf z}^{t+1} - {\bf z}^{*}\|^2 + 3\alpha^2  {\bar L}^2 \|{\bf z}^{t+1} - {\bf z}^{t}\|^2 \nonumber\\
& = \|{\bf z}^{t+1} - {\bf z}^{*}\|_{3\alpha^2\bar{L}^2{\bf I}_n}^2 + \|{\bf z}^{t+1} - {\bf z}^{t}\|_{3(N^TN+\alpha^2{\bar L}^2{\bf I}_n)}^2.
\end{align}
Note that
\begin{align*}
\frac{\|{\bf u}^{*} - {\bf u}^{t+1}\|_{\frac{\sigma}{2}MM^T+\delta M}^2}{\frac{\sigma \lambda_{\max}(MM^T)}{2}+\delta \lambda_{\max}(\frac{M+M^T}{2})}
\leq \|{\bf u}^{*} - {\bf u}^{t+1}\|^2
\leq \frac{\|{\bf u}^{*} - {\bf u}^{t+1}\|_{M^TM}^2}{\tilde{\lambda}_{\min}(M^TM)}.
\end{align*}
If the following conditions hold
\begin{equation}
\label{Eq:AimIneq-2}
\left\{
\begin{array}{l}
P \succeq 3(\frac{1}{2}c_1\sigma + c_2 \delta)\alpha^2 {\bar L}^2 {\bf I}_n,\\
Q \succeq 3(\frac{1}{2}c_1\sigma + c_2 \delta)(N^TN + \alpha^2 {\bar L}^2 {\bf I}_n),
\end{array}%
\right.
\end{equation}%
then {\eqref{Eq:AimIneq-1}} holds.
To show {\eqref{Eq:AimIneq-2}}, it is sufficient to prove
\begin{equation}
\label{Eq:AimIneq-3}
\left\{
\begin{array}{l}
(\lambda_{\max}(\frac{N+N^T}{2})+ 3c_2\alpha^2{\bar L}^2)\delta \leq -\frac{1}{\sigma} +(\bar{\mu} - \frac{\eta}{2})\alpha - \frac{3}{2}c_1{\bar L}^2\sigma \alpha^2,\\
3c_2(\lambda_{\max}(N^TN)+\alpha^2{\bar L}^2) \delta \leq \lambda_{\min}(\frac{N^T+N}{2}) - \frac{c_3\sigma}{2} - \frac{{\bar L}^2\alpha}{2\eta} - \frac{3}{2}c_1{\bar L}^2\sigma \alpha^2.
\end{array}%
\right.
\end{equation}%
Let
$c_4 \triangleq (\bar{\mu}- \frac{\eta}{2})+\sqrt{\Delta_1}$,
$c_5 \triangleq \frac{{\bar L}^2}{\eta}$,
$c_{6} \triangleq \frac{2c_4c_5 + 12c_1{\bar L}^2}{c_4^2}$,
$c_{7} \triangleq \frac{\lambda_{\min}^2(N^T+N)}{4c_3}$,
$c_{8} \triangleq a(c_{7}+2)-(2-c_{7})$ for some positive constant $a\in (0,1)$,
$\Delta_3 \triangleq \lambda_{\min}^2(N^T+N) - 4c_3c_{6}$.
After reduction, we  claim that if the following conditions hold
\begin{align}
&\frac{2-c_{7}}{2+c_{7}}<a<1,\label{Eq:Cond-a}\\
&{\bar \mu}
> \bigg(\sqrt{\frac{6c_1}{1-a^2}} + \frac{1}{c_8}\sqrt{\frac{1-a^2}{6c_1}}\bigg) \bar{L},\label{Eq:CondonFun}\\
&{\bar \mu}\bigg(1-\sqrt{1-\frac{4{\bar L}^2}{c_8{\bar \mu}^2}}\bigg) < \eta < \min \bigg\{{\bar \mu}\bigg(1+\sqrt{1-\frac{4{\bar L}^2}
{c_8{\bar \mu}^2}}\bigg), 2\bigg(\bar{\mu} - \sqrt{\frac{6c_1}{1-a^2}}\bar{L}\bigg)\bigg\},\label{Eq:Condoneta}\\
&\frac{\lambda_{\min}(N^T+N) - \sqrt{\Delta_3}}{2c_3}< \sigma <\frac{\lambda_{\min}(N^T+N) + \sqrt{\Delta_3}}{2c_3},\label{Eq:Condonsigma}\\
&\frac{{\bar \mu} - \frac{\eta}{2} - \sqrt{\Delta_1}}{3c_1{\bar L}^2\sigma} < \alpha < \min\bigg\{\frac{{\bar \mu} - \frac{\eta}{2} + \sqrt{\Delta_1}}{3c_1{\bar L}^2\sigma}, \frac{-\frac{{\bar L}^2}{2\eta}+ \sqrt{\Delta_2}}{3c_1{\bar L}^2 \sigma}\bigg\},\label{Eq:Condonalpha}
\end{align}
then {\eqref{Eq:AimIneq-3}} holds for some positive constant $\delta$.
We then end the proof of this theorem.\hfill$\Box$
\end{proof}

\section{Numerical Experiments}

In this section, we present the results of a series of numerical experiments that demonstrate the effectiveness of the proposed algorithms relative to the subgradient-push algorithm. The used network and its corresponding mixing matrix $A$ are depicted in Fig. {\ref{Fig:Network}}.

\subsection{Decentralized Least Squares}

Consider the following decentralized least squares  problem:
\begin{equation}
\label{Eq:DistributedLS}
x^* \gets \mathop{\mathrm{argmin}}_{x\in \mathbf{R}^p} f(x) = \sum_{i=1}^n f_i(x),
\end{equation}
where $f_i(x) = \frac{1}{2} \|B_{(i)} x-b_{(i)}\|_2^2, B_{(i)} \in \mathbf{R}^{m_i\times p}, b_{(i)} \in \mathbf{R}^{m_i}$ for $i=1, \ldots,n.$ The  solution $x^*$ is $B^{\dag}b$, where $B = \sum_{i=1}^n B_{(i)}^TB_{(i)}$, $b = \sum_{i=1}^n B_{(i)}^T b_{(i)}$, and $B^{\dag}$ is the pseudo-inverse of $B$.
In this experiment, we take $n=5$, $p=256$, and $m_i = 100$ for $i=1,\ldots,5$.
For both ExtraPush and Normalized ExtraPush, we first choose an $\alpha$ in the hand-optimized manner (in this case, $\alpha = 0.1$) and then take a smaller one like $\alpha = 0.02$ to show the difference due to a smaller step size.
The step size of the subgradient-push algorithm is handed optimized to $\alpha_t = \frac{0.8}{\sqrt{t}}$.
The experiment results are illustrated in Fig. {\ref{Fig:DLS}}.

\begin{figure}[ht]
\centering
  \includegraphics[scale=0.6]{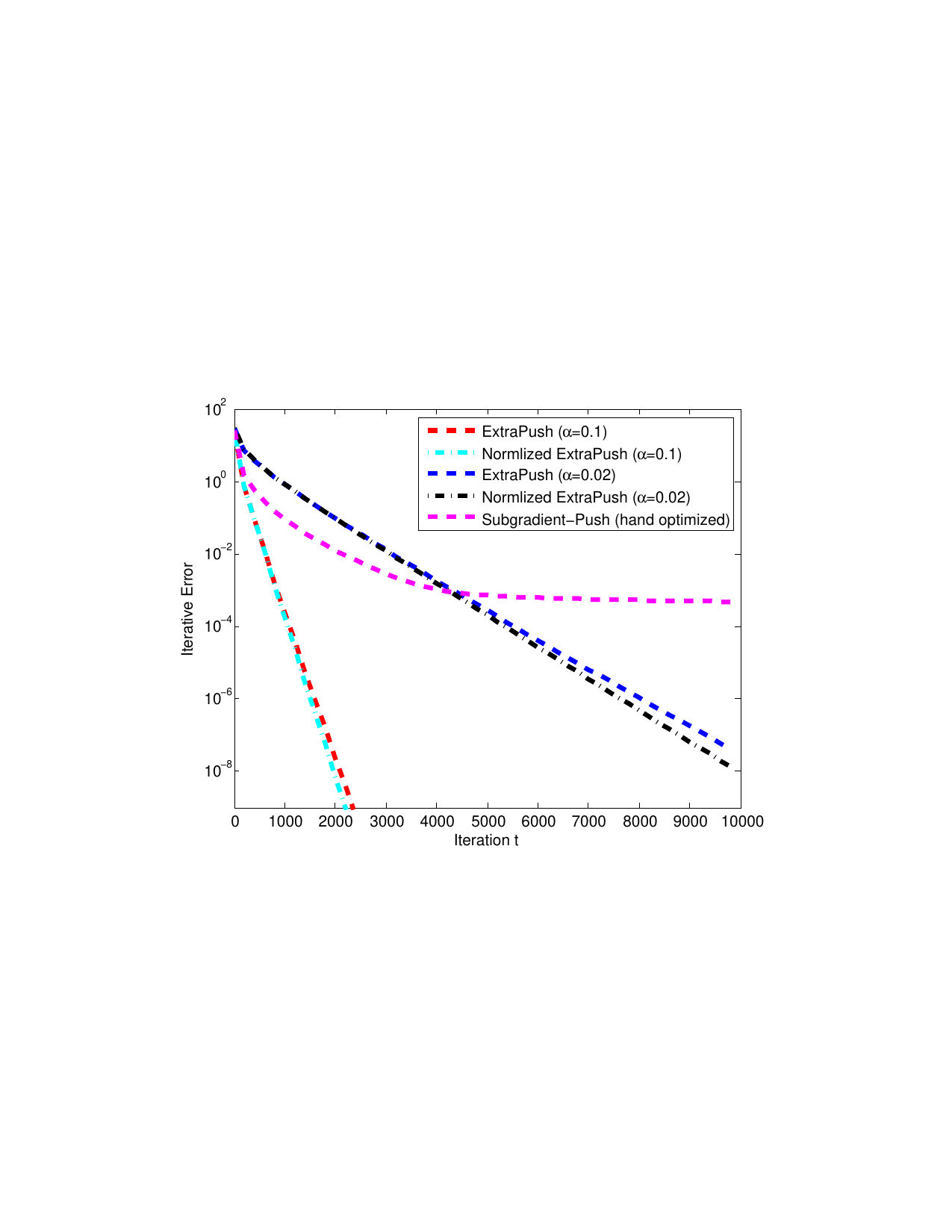}\\
  \caption{Experiment results for decentralized least squares regression. History of $\|{\bf x}^t-{\bf x}^{*}\|_2$, where ${\bf x}^*$ is the exact solution. The performances of ExtraPush and Normalized ExtraPush are very similar.}\label{Fig:DLS}\vskip -4mm
\end{figure}\vskip 2mm


As illustrated in Fig. {\ref{Fig:DLS}}, the performances of ExtraPush and Normalized ExtraPush are almost identical. Their linear convergence rates are affected by different step sizes; a smaller $\alpha$ leads to a slower rate, as one would expect.

\subsection{Decentralized Huber-like Regression}

Instead of least squares, this experiment minimizes the Huber loss function, which is known to be robust to outliers:
\begin{equation}
\label{Eq:DistributedHuber}
x^* \gets \mathop{\mathrm{argmin}}_{x\in \mathbf{R}^p} f(x) = \sum_{i=1}^n f_i(x),
\end{equation}
where $f_i(x) = \sum_{j=1}^{m_i}H_{\xi}(B_{(i)j}x-b_{(i)j}),$ $B_{(i)j}$ is $j$th row of matrix $B_{(i)} \in \mathbf{R}^{m_i\times p}$ and $b_{(i)j}$ is the $j$th entry of vector $b_{(i)} \in \mathbf{R}^{m_i}$  for $i=1, \ldots,n.$ The Huber loss function is defined as
\begin{equation}
\label{Eq:HuberLoss}
H_{\xi}(a) =
\left\{
\begin{array}{lll}
\frac{1}{2}a^2, & \text{for}\ |a| \leq \xi & (\ell_2^2 \ \text{zone}),\\
\xi(|a| - \frac{1}{2} \xi), & \text{otherwise} & (\ell_1 \ \text{zone}).
\end{array}%
\right.
\end{equation}%
Similar to the experimental setting in {\cite{Yin-EXTRA2015}}, we let $\xi  =2$ and set the solution $x^*$ in the $\ell_2^2$ zone while initializing $x_{(i)}^0$ in the $\ell_1$ zone for all agents $i$. Similar to the last experiment, we  teset two different step sizes,  $\alpha = 0.1$ and $0.02$, where $\alpha = 0.1$ is hand-optimized. The step size of the subgradient-push algorithm is hand optimized to $\alpha_t = \frac{5}{\sqrt{t+100}}$. The numerical results are depicted in Fig. {\ref{Fig:DHuber}}.

As shown by Fig. {\ref{Fig:DHuber}}, when $\alpha = 0.1$, both ExtraPush and Normalized ExtraPush algorithms have the sublinear convergence in their first 500 iterations and then show linear convergence, as $x_{(i)}^t$ for most $i$ have entered the $\ell_2^2$ zone. While for $\alpha = 0.02,$ more iterations (about 2500) are needed before both algorithms start decaying linearly.

\begin{figure}[ht]
\centering
  \includegraphics[scale=0.6]{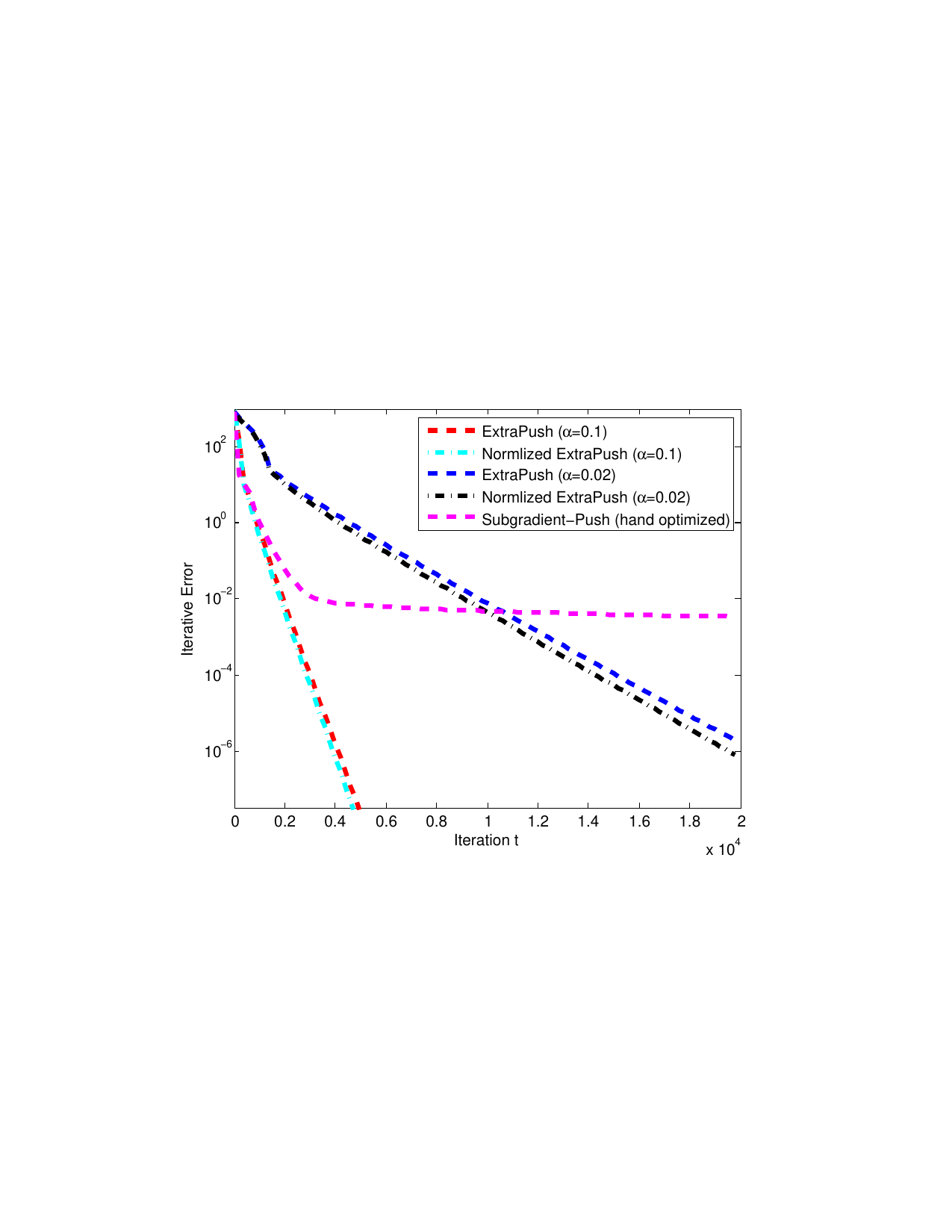}\\
  \caption{Experiment results for decentralized Huber regression. History of $\|{\bf x}^t-{\bf x}^{*}\|_2$, where ${\bf x}^*$ is the exact solution. The performances of ExtraPush and Normalized ExtraPush are very similar.}\label{Fig:DHuber}\vskip -4mm
\end{figure}\vskip 2mm


\section{Conclusion}

In this note, we propose a decentralized algorithm called ExtraPush,
as well as its simplified version called Normalized ExtraPush, for solving  distributed consensus optimization problems over  directed graphs.
The algorithms use  column-stochastic mixing matrices.
We show that Normalized ExtraPush converges at a linear rate if the objective function is smooth and strongly convex.
In additional, we develop the first-order optimality conditions
and provide the convergence of ExtraPush under the boundedness assumption.
The convergence as well as the rate of convergence of ExtraPush should be justified in the future.
Moreover, when applied to a directed time-varying network, the performance of the proposed algorithms will also be studied. Another line of future research is to generalize
ExtraPush to handle the sum of smooth and proximable (possibly nonsmooth) functions as done in \cite{Shi-PGEXTRA2015} that has generalized Extra this way.

\vskip 2mm
\noindent {\bf Acknowledgments.}
The work of W. Yin has been supported in part by the NSF grants DMS-1317602 and ECCS-1462398. The work of J. Zeng has been supported in part by the NSF grant 11501440.

\end{document}